\numberwithin{equation}{section}
\numberwithin{figure}{section}
\theoremstyle{plain}
\newtheorem{theorem}{Theorem}[section]
\newtheorem{proposition}[theorem]{Proposition}
\newtheorem{lemma}[theorem]{Lemma}
\newtheorem{corollary}[theorem]{Corollary}
\theoremstyle{definition}
\newtheorem{remark}[theorem]{Remark}
\newtheorem{example}[theorem]{Example}
\newtheorem{definition}[theorem]{Definition}
\numberwithin{equation}{section}
\newtheorem[S, bodystyle=\normalfont\noindent]{defiS}{Definition}[section]
\title{Introduction to Elliptic Quasi-Modular Forms via Moduli Spaces\footnote{MSC2020: 11F11, 14C30, 11G05, 14D22, 14J33}}
\author[1]{Walter Andrés Páez Gaviria}
\affil[1]{Instituto de Matemática Pura e Aplicada. Rio de Janeiro, Brasil.}
\affil[1]{Institución Universitaria Politécnico Grancolombiano. Bogotá, Colombia.}
\date{}
\begin{document}

\maketitle
\begin{abstract}
In this paper we present rigorously and as succintly as possible the theory of elliptic quasi-modular forms by means of moduli spaces and the Gauss-Manin connection, and deal with one of the main historical appearances of quasi-modular forms, which was the seminal case of mirror symmetry treated by Dijkgraaf.
\end{abstract}

\section{Introduction}

There are many great introductions to the theory of modular forms, for instance \parencite{123,diamond,koblitz1993,serre}. Quasi-modular forms were first considered by Kaneko and Zagier in \parencite{Kaneko}, and since then they have appeared in many different contexts, for instance in \parencite{mov2008,mov2012,pascal,dijkgraaf,ochiai}. Elementary expositions of the theory of quasi-modular forms can be found in \parencite[First lecture notes]{123} and \parencite{pascal}. In this paper we present rigorously and as succintly as possible the theory of elliptic quasi-modular forms by means of moduli spaces and the Gauss-Manin connection, and deal with one of the main historical appearances of quasi-modular forms, which was the seminal case of mirror symmetry treated by Dijkgraaf.

The approach used in this paper is part of a more general project called the Gauss Manin Connection in Disguise (GMCD), developed by Hossein Movasati and its collaborators, whose main goal is ``to unify  automorphic forms with topological string partition functions and give a systematic way of studying sub-moduli  of moduli spaces using the theory of holomorphic/algebraic  foliations. It aims to replace the variation of Hodge structures with certain vector fields whose solutions are a vast generalization of automorphic forms. It aims to return Hodge theory to its origin which is the study of multiple integrals due to Abel, Gauss, Legendre, Poincaré, Picard and many others."

For further cases of the GMCD project, the reader is referred to \parencite{Movasati2015Hecke,AlimMovasatiScheideggerYau2016,Movasati2017NoetherLefschetz,HaghighatMovasatiYau2017,MovasatiNikdelan2021Dwork,AlimVogrin2021GaussManinLieK3,CaoMovasatiVillaflorLoyola2024,Paez2025}.

\section{Quasi-modular forms and Ramanujan's identities}

Let $k$ be a non-negative integer.

\begin{definition}
A \textbf{modular form for $SL(2,\mathbb{Z})$ of weight $k$} is a holomorphic function $f:\mathbb{H}\rightarrow \mathbb{C}$ satisfying:
\begin{itemize}
\item (automorphy condition) $f(Mz)=(cz+d)^kf(z)$ for all $M=\begin{psmallmatrix}a & b\\c & d\end{psmallmatrix}\in SL(2,\mathbb{Z})$.

In particular, using $M=\begin{psmallmatrix}1 & 1\\0 & 1\end{psmallmatrix}\in SL(2,\mathbb{Z})$, we get $f(z+1)=f(z)$.
\item (holomorphic at $i\infty$) Since $f$ is periodic with period $1$, it has a Fourier expansion $f(z)=\sum_{n=-\infty}^{\infty} a_n q^n$, where $q=e^{2\pi iz}$. We require that $a_n=0$ for every $n<0$. 
\end{itemize}

\noindent   We denote by $\mathfrak{M}_k$ the set of all modular forms of weight $k$. It is straightforward to see that $\mathfrak{M}=\oplus_{k\geq 0}\mathfrak{M}_k$ has the structure of a graded $\mathbb{C}$-algebra.
\end{definition}

Observe that, by considering $M=\begin{psmallmatrix}-1 & 0\\0 & -1\end{psmallmatrix}\in SL(2,\mathbb{Z})$ in the automorphy condition, every modular form of odd weight is identically zero. 

The main examples of modular forms are the Eisenstein series  $E_k:\mathbb{H}\rightarrow \mathbb{C}$ defined as follows:

\begin{equation}\label{bernoulli}E_{k}(z)=1-\frac{2k}{B_k}\sum_{n=1}^{\infty} \sigma_{k-1}(n)q^n.
\end{equation}

Here $\sigma_i(n)=\sum_{d|n}d^i$.  For $k>2$, it can be proved that \parencite[First lecture notes, \S 2.1]{123}

\begin{equation}
E_k(z)=\frac{1}{2 \zeta(k)}
\sum_{\substack{(m,n)\in \mathbb{Z}\times \mathbb{Z} \\ (m,n)\neq (0,0)}}
\frac{1}{(mz+n)^k}.
\end{equation}

From the previous expression  we get that, for $k>2$, the Eisenstein series satisfy

\begin{equation} \label{eisensteinautomorphy}
E_k(\gamma\cdot z)=(cz+d)^kE_k(z)
\end{equation}for every $\gamma=\begin{psmallmatrix} a & b \\ c & d \end{psmallmatrix}\in \text{SL}_2(\mathbb{Z})$. From equations \eqref{bernoulli} and \eqref{eisensteinautomorphy},  we get that $E_k\in \mathfrak{M}_k$ for $k>2$.

As for the Eisenstein series, for many cases the coefficients of the $q$-expansion of a modular form have some kind of ``arithmetic" information. Furthermore, the structure of the algebra $\mathfrak{M}$ is well-known:

\begin{theorem} $\mathfrak{M} $ is equal to $\mathbb{C}[E_4,E_6]$. In other words, every modular form can be written as a polynomial in the Eisenstein series $E_4$ and $E_6$. Furthermore, $E_4$ and $E_6$ are algebraically independent.
\end{theorem}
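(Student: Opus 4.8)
The plan is to prove three things in sequence: that $E_4$ and $E_6$ generate $\mathfrak{M}$, that they are algebraically independent, and (implicitly) that $\mathfrak{M}_k$ is finite-dimensional so that the dimension count closes up. I would start by recording the valence formula (the $k/12$ formula) for the orders of vanishing of a nonzero $f \in \mathfrak{M}_k$: integrating $\frac{1}{2\pi i}\frac{df}{f}$ around the boundary of the standard fundamental domain of $SL(2,\mathbb{Z})$ acting on $\mathbb{H}$, and accounting for the elliptic points $i$ and $\rho = e^{2\pi i/3}$ and the cusp $i\infty$, yields
\begin{equation}\label{valence}
\operatorname{ord}_{i\infty}(f) + \tfrac12 \operatorname{ord}_i(f) + \tfrac13 \operatorname{ord}_\rho(f) + \sum_{\substack{P \in SL(2,\mathbb{Z})\backslash \mathbb{H} \\ P \neq i, \rho}} \operatorname{ord}_P(f) = \frac{k}{12}.
\end{equation}
This is the one genuinely analytic input, and it is the main obstacle in the sense that everything else is formal bookkeeping once it is in hand; I would either prove it via the contour integral or cite \parencite[First lecture notes]{123} since the excerpt permits citing earlier expositions.

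From \eqref{valence} the algebraic independence of $E_4, E_6$ is quick. The discriminant $\Delta = \frac{1}{1728}(E_4^3 - E_6^2) \in \mathfrak{M}_{12}$ is nonzero (its $q$-expansion begins $q + \cdots$), and by \eqref{valence} with $k=12$ it has a simple zero at $i\infty$ and no other zeros, so $\Delta$ is nowhere vanishing on $\mathbb{H}$. If $P(X,Y)$ were a nontrivial polynomial relation, take its isobaric (weighted-homogeneous) components with $\deg X = 4$, $\deg Y = 6$; each component is itself a relation since modular forms of distinct weights are linearly independent (look at leading $q$-behaviour, or use that $\mathfrak{M} = \bigoplus \mathfrak{M}_k$). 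So I may assume $P$ isobaric of weight $k$, write $P(E_4, E_6) = 0$, divide through by a power of $E_6$ (harmless as $E_6 \not\equiv 0$) to get a polynomial identity in the single modular function $E_4^3/E_6^2$, which would force that function to be constant on $\mathbb{H}$ — contradicting that $E_4^3/E_6^2 = E_4^3/(E_4^3 - 1728\Delta)$ takes the value $1$ exactly at the zero of $E_4$ (which exists, at $\rho$, by \eqref{valence} for $k=4$) and the value $0$ at the zero of $E_6$.

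For generation, I would argue by induction on the weight $k$. Odd $k$ gives $\mathfrak{M}_k = 0$ as already noted in the excerpt. For $k = 0$, a weight-zero modular form is a bounded holomorphic function on the compact Riemann surface $\overline{SL(2,\mathbb{Z})\backslash\mathbb{H}} \cong \mathbb{P}^1$, hence constant, so $\mathfrak{M}_0 = \mathbb{C}$; and $\mathfrak{M}_2 = 0$ and $\mathfrak{M}_k = \mathbb{C}\cdot(\text{monomial})$ for $k = 4, 6, 8, 10, 14$ follow directly from \eqref{valence} (these small cases force the location and order of all zeros, and two forms with the same divisor and the same leading $q$-coefficient coincide). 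For the inductive step with even $k \geq 4$: choose a monomial $E_4^a E_6^b \in \mathfrak{M}_k$ (possible for every even $k \geq 4$), which is nonzero at $i\infty$; given any $f \in \mathfrak{M}_k$, subtract the right multiple of this monomial to kill the constant term of $f$ at $i\infty$, so $f - c\,E_4^a E_6^b$ vanishes at the cusp and is therefore divisible by $\Delta$ in the ring of holomorphic functions on $\mathbb{H}$ (since $\Delta$ is nonvanishing on $\mathbb{H}$ with a simple zero at the cusp); the quotient $(f - c\,E_4^a E_6^b)/\Delta$ is holomorphic on $\mathbb{H}$, transforms with weight $k - 12$, and is holomorphic at $i\infty$, hence lies in $\mathfrak{M}_{k-12}$, which by induction is a polynomial in $E_4, E_6$. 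Multiplying back by $\Delta = \frac{1}{1728}(E_4^3 - E_6^2)$ and adding $c\,E_4^a E_6^b$ expresses $f$ as a polynomial in $E_4, E_6$, completing the induction and the proof.
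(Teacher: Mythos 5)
The paper does not prove this theorem at all: it is stated as classical background (the structure theorem for $\mathfrak{M}$), with the standard references given in the introduction, so there is no in-paper argument to compare yours against. Your proposal is the standard textbook proof --- valence formula, the one-dimensionality of $\mathfrak{M}_k$ in low weight, division by the nowhere-vanishing cusp form $\Delta$ to run the induction in steps of $12$, and reduction of a putative isobaric relation to a one-variable polynomial identity in $E_4^3/E_6^2$ --- and its overall structure is sound and complete. One detail is stated incorrectly, though it does not break the argument: the function $E_4^3/E_6^2$ takes the value $0$ at the zero of $E_4$ (at $\rho$) and has a pole at the zero of $E_6$ (at $i$); it does not take the values $1$ and $0$ there as you wrote (you may be thinking of $E_6^2/E_4^3$ or of $j/1728$). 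What you actually need is only that $E_4^3/E_6^2$ is non-constant on the connected open set where $E_6\neq 0$, which follows from the corrected values (or from its limit $1$ at $i\infty$ versus its zero at $\rho$), so the contradiction with a nontrivial polynomial relation still goes through. A second small point worth making explicit: dividing an isobaric relation of weight $k$ by one of its monomials produces a \emph{Laurent} polynomial in $E_4^3/E_6^2$ (since two monomials of equal weight differ by $(E_4^3/E_6^2)^t$ for some $t\in\mathbb{Z}$), so you should clear denominators before invoking the finiteness of the root set.
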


\noindent What happens with $E_2$ is quite interesting. It can be shown that 
\begin{equation}
E_2(z)=1+\frac{6}{\pi^2}\sum_{m=1}^{\infty}\sum_{n=-\infty}^{\infty}\frac{1}{(mz+n)^2}.
\end{equation}

\noindent Therefore, $E_2(z+1)=E_2(z)$. On the other, the other automorphy condition does not hold. It can be proved that \parencite[First lecture notes, Proposition 6]{123}
\begin{equation}z^{-2}E_2(\frac{-1}{z})=E_2(z)+\frac{12}{2\pi i z}.\end{equation}
    
In \parencite[First lecture notes, \S5.1]{123} we find the following definition (whose origin dates back to \parencite{Kaneko})
    
\begin{definition}
The graded algebra $\tilde{\mathfrak{M}}$ of quasimodular forms for $SL(2,\mathbb{Z})$ is defined to be $\mathbb{C}[E_2,E_4,E_6]$.
\end{definition}

It must be admitted that this is an extremely \textit{ad hoc} definition. So, one may ask: Why are quasimodular forms interesting or useful?

A hint of the interest of quasi-modular forms is given by the following example:

\begin{example} Let $f\in\mathfrak{M}_k$. Let us define \begin{equation}\label{differentiation}f':=\frac{1}{2\pi i}\frac{df}{dz}=\sum_{n\geq 1}n a_n q^{n-1}.
\end{equation}
            
Then, $$f'(\frac{az+b}{cz+d})=(cz+d)^{k+2}f'(z)+\frac{k}{2\pi i}c(cz+d)^{k+1}f(z).$$
            
So, $f'$ is not modular! But using this computation, and the previous proposition about $E_2$, it is easy to see that $$f'-\frac{k}{12}E_2 f\in\mathfrak{M}_{k+2}.$$
            
Therefore, $f'\in\mathbb{C}[E_2,E_4,E_6]_{k+2}$.

\end{example}
By means of the previous example we see that the differentiation defined in e
quation \eqref{differentiation} defines a structure of differential graded $\mathbb{C}$-algebra on $\tilde{\mathfrak{M}}$. Furthermore, the structure of the differential algebra $\tilde{\mathfrak{M}}$ is determined by means of the following theorem, whose first proof is due to Ramanujan \parencite[Equation 30, page 181]{ramanujan}. 

\begin{theorem}\label{ramanujan}

    The algebra $\mathfrak{\tilde{M}}$ is closed under differentiation. More specifically, we have
    
    $$E_2'=\frac{E_2^2-E_4}{12},$$
    $$E_4'=\frac{E_2 E_4-E_6}{3},$$
    $$E_6'=\frac{E_2E_6-E_4^2}{2}.$$

\end{theorem}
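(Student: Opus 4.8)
The plan is to use the principle that a holomorphic function on $\mathbb H$ with the right $q$-expansion growth which is modular of weight $k$ must be a fixed polynomial in $E_4, E_6$, combined with the transformation law for the derivative computed in the Example above. Concretely, for each identity I would first check that the right-hand side and the left-hand side are quasi-modular of the expected weight, then extract a genuine modular form of that weight and pin it down, either by matching finitely many Fourier coefficients against the known basis of $\mathfrak M_k$ or by showing the relevant difference is a cusp form in a weight where there are none.

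First I would treat $E_4'$ and $E_6'$. Since $E_4 \in \mathfrak M_4$ and $E_6 \in \mathfrak M_6$, the Example (with $f = E_4$, $k=4$ and $f = E_6$, $k=6$) gives that $E_4' - \tfrac13 E_2 E_4 \in \mathfrak M_6$ and $E_6' - \tfrac12 E_2 E_6 \in \mathfrak M_8$. Now $\dim \mathfrak M_6 = 1$, spanned by $E_6$, and $\dim \mathfrak M_8 = 1$, spanned by $E_4^2$ (this follows from the structure theorem $\mathfrak M = \mathbb C[E_4, E_6]$). Hence $E_4' - \tfrac13 E_2 E_4 = \lambda E_6$ and $E_6' - \tfrac12 E_2 E_6 = \mu E_4^2$ for constants $\lambda, \mu$, and comparing the $q^1$-coefficients of both sides — using $E_k = 1 - \tfrac{2k}{B_k}\sum \sigma_{k-1}(n) q^n$ and $f' = \sum n a_n q^{n-1}$ from \eqref{differentiation} — forces $\lambda = -\tfrac13$ and $\mu = -\tfrac12$, giving the stated formulas.

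For $E_2'$ the same strategy needs one extra input because $E_2$ is only quasi-modular. I would compute directly, from the transformation law $z^{-2} E_2(-1/z) = E_2(z) + \tfrac{12}{2\pi i z}$, that $E_2'(-1/z) = z^2\big(z^2 E_2'(z) + \tfrac{z}{2\pi i} E_2(z)\big)$-type correction terms appear, and then verify that the combination $E_2' - \tfrac1{12}(E_2^2 - E_4)$ has vanishing period-anomaly, i.e. is genuinely modular of weight $4$; since $\dim \mathfrak M_4 = 1$ (spanned by $E_4$) and the combination has constant term $0$ (again by comparing $q^0$-coefficients: $E_2'$ has no constant term, $E_2^2 - E_4$ has constant term $1 - 1 = 0$), it must be identically $0$. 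Alternatively, and more cleanly, I would apply the Example with $k = 2$ formally to get $E_2' - \tfrac16 E_2^2$ transforming with the residual anomaly of $E_2^2$, and reorganize; or simplest of all, note $\tfrac1{12}(E_2^2 - E_4)$ is forced once we know the derivation is well-defined on $\mathbb C[E_2, E_4, E_6]$ and matches on enough coefficients.

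The main obstacle is the $E_2'$ case: unlike $E_4', E_6'$, one cannot directly invoke the Example (which assumes $f$ modular), so one must either carry the $\tfrac{12}{2\pi i z}$ correction term through a derivative computation carefully — watching the chain rule produce a $(cz+d)^{-2}$ from differentiating the argument — or bootstrap from the already-established $E_4'$ identity. I would favor the bootstrap: differentiate the relation $E_4' = \tfrac13(E_2 E_4 - E_6)$ is not directly helpful, but one can instead use that $\Delta = \tfrac1{1728}(E_4^3 - E_6^2)$ satisfies $\Delta' = E_2 \Delta$ (a clean consequence of $\Delta$ being a weight-$12$ cusp form with $q$-leading term $q$, plus the logarithmic derivative $\Delta'/\Delta$ being weight $2$ holomorphic hence a multiple of $E_2$), and then solve for $E_2'$ from the chain rule applied to $1728\Delta = E_4^3 - E_6^2$ using the two identities already proven. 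Either route closes the argument with only routine coefficient bookkeeping.
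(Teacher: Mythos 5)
Your proof is correct in substance, but it takes a genuinely different route from the one this paper develops. The paper does not establish Theorem \ref{ramanujan} by weight-and-dimension arguments at all: the identities are stated with a citation to Ramanujan, and the paper's own derivation comes later, in Section 3, where the Ramanujan vector field $\mathsf{R}_{\mathfrak{g}_1^T}$ on the moduli space $\mathsf{T}$ of enhanced elliptic curves is computed from the Gauss--Manin connection, yielding the system $g_1'=g_1^2-\tfrac{1}{12}g_2$, $g_2'=4g_1g_2-6g_3$, $g_3'=6g_1g_3-\tfrac{1}{3}g_2^2$ for the pullbacks $g_i=\mathsf{t}^*(t_i)$; the identities of Theorem \ref{ramanujan} then follow from the identification $E_2=12g_1$, $E_4=12g_2$, $E_6=216g_3$. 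Your argument is instead the classical transcendental one: the Example gives $E_4'-\tfrac{1}{3}E_2E_4\in\mathfrak{M}_6$ and $E_6'-\tfrac{1}{2}E_2E_6\in\mathfrak{M}_8$, both one-dimensional spaces, and a single coefficient comparison pins down the constants (note that the $q^0$-coefficient already suffices here: it equals $-\tfrac13$ and $-\tfrac12$ respectively, so you do not need the $q^1$-coefficients). This elementary route is shorter and self-contained, at the cost of the geometric content; the paper's route buys a uniform mechanism (the modular vector field) that generalizes beyond elliptic curves, which is the point of the GMCD program.

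One concrete flaw you should repair: the ``bootstrap'' you say you favor for the $E_2'$ identity does not close. Differentiating $1728\Delta=E_4^3-E_6^2$ and substituting $\Delta'=E_2\Delta$ together with the already-proven formulas for $E_4'$ and $E_6'$ gives $1728\,E_2\Delta = E_2E_4^3-E_4^2E_6-E_2E_6^2+E_4^2E_6=E_2(E_4^3-E_6^2)$, which is a tautology in which $E_2'$ never appears; there is nothing to solve for. Keep your first route instead: differentiate the anomaly relation $z^{-2}E_2(-1/z)=E_2(z)+\tfrac{12}{2\pi i z}$ to get the transformation of $E_2'$, check that the anomalies cancel in $E_2'-\tfrac{1}{12}E_2^2$, conclude that $E_2'-\tfrac{1}{12}(E_2^2-E_4)$ lies in $\mathfrak{M}_4=\mathbb{C}E_4$, and kill it by its vanishing constant term. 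That computation is routine but must be done; with it, your proof is complete.
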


In this short paper we will give an account of the previous results by means of moduli spaces and the Gauss-Manin connection for elliptic curves, following \parencite{mov2008,mov2012,pascal}. Furthermore, we will also explain in full detail the following example, which is the manifestation of mirror symmetry in the elliptic curve case, first studied by Dijkgraaf in \parencite{dijkgraaf}.
\begin{example}
Let $E$ be an elliptic curve over $\mathbb{C}$, and $g\geq 1, m\geq 2$ integers.

A pair $(C,f)$ consisting of a smooth complex curve $f$ and a holomorphic map $f:C\rightarrow E$ is called an \textit{simple $m$-branched cover} if $C$ is connected, every ramification point has index $m$, and different ramification points have different images.

Let $X_{g,d}^{(m)}$ be the set of isomorphism classes of $m$-branched covers of $E$ of degree $d$ and genus $g$, and let $$N_{g,d}^{(m)}=\sum_{(C,f)\in X_{g,d}^{(m)}} \frac{1}{|Aut{(C,f)}|}.$$

Define $$F_g^{(m)}(q)=\sum_{d\geq 1}N_{g,d}^{(m)}q^d.$$

\begin{theorem}\parencite{dijkgraaf,ochiai} For $g\geq 2$, $F_g^{(m)}(q)\in \mathfrak{\tilde{M}}$. Furthermore, $F_g^{(2)}(q)\in \mathfrak{\tilde{M}}_{6g-g}.$
\end{theorem}
\end{example}

\section{Enhanced elliptic curves and the Ramanujan vector field}

In \parencite{mov2008,mov2012,pascal}, Movasati was able to recover the algebra $\mathfrak{M}_{*}$ of quasi-modular forms and its differential structure, i.e., the Ramanujan's relations \ref{ramanujan} by means of considering a moduli space of enhanced elliptic curves $\mathsf{T}$, and an algebraic vector field $\mathsf{R}$ defined on it, called the Ramanujan or modular vector field, which contains the information of the Ramanujan's relations.

\begin{definition}\label{enhanced}
    Let $\mathsf{T}$ denote the moduli of triples $(E,\alpha,\beta)$ where:
    \begin{itemize}
    \item $E$ is a complex elliptic curve;
    \item $\alpha\in F^1H_{dR}^1(E/\mathbb{C})$ and $\beta\in H_{dR}^1(E/\mathbb{C})$ are such that $\langle \alpha,\beta\rangle=1$.
    \end{itemize}
    
    Such a triple is called an \textbf{enhanced elliptic curve}.
    \end{definition}

We begin by recalling the well-known Weierstrass form of a complex elliptic curve:
\begin{theorem}\label{weierstrass}
Every complex elliptic curve is isomorphic to a hypersurface  $E_{t_1,t_2}$ in $\mathbb{P}^2$ defined in the affine chart given by $z\neq0$ by the zero locus of the polynomial 

\begin{equation}
F_{t_2,t_3}=y^2-x^3+t_2x+t_3,\,\,\,\,   \Delta:=27t_3^2-t_2^3\neq 0.
\end{equation}

Furthermore, $E_{t_1,t_2}\cong E_{t_1',t_2'}$ if, and only if, there is some $\kappa\in\mathbb{C}^*$ such that $t_i'=\kappa^it_i$ for $i=2,3$.
\end{theorem}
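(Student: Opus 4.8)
I would prove this by combining the classical Weierstrass uniformization with the homothety classification of lattices (equivalently, a Riemann--Roch argument on $(E,O)$). Since $E$ is a complex elliptic curve, by the uniformization theorem it is biholomorphic to a complex torus $\mathbb{C}/\Lambda$ for a lattice $\Lambda=\mathbb{Z}\omega_1\oplus\mathbb{Z}\omega_2$. The plan is to produce the embedding into $\mathbb{P}^2$ via the Weierstrass $\wp$-function of $\Lambda$, and then read off both the normal form and the isomorphism criterion. (Alternatively, one argues purely algebraically: $\ell(nO)=n$ for $n\geq 1$ by Riemann--Roch, so one picks $x\in L(2O)$ and $y\in L(3O)$ with poles of exact orders $2$ and $3$ at $O$; the seven elements $1,x,y,x^2,xy,x^3,y^2$ of the six-dimensional space $L(6O)$ satisfy a linear relation in which $x^3$ and $y^2$ must appear with nonzero coefficient, by a pole-order count, and completing the square in $y$ and the cube in $x$ — legitimate over $\mathbb{C}$ — yields the stated equation; since $|3O|$ is very ample, $[x:y:1]$ is a closed immersion onto a smooth plane cubic.)

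For existence, recall that $\wp_\Lambda$ and $\wp_\Lambda'$ satisfy an algebraic relation of the shape $(\wp_\Lambda')^2=4\wp_\Lambda^3-g_2(\Lambda)\wp_\Lambda-g_3(\Lambda)$, and that $z\mapsto[\wp_\Lambda(z):\wp_\Lambda'(z):1]$, with $0\mapsto[0:1:0]$, is a biholomorphism of $\mathbb{C}/\Lambda$ onto the projective cubic cut out by this equation. A linear rescaling of the coordinates $x,y$ brings this curve to the zero locus of $F_{t_2,t_3}$, with $t_2,t_3$ explicit constant multiples of $g_2(\Lambda),g_3(\Lambda)$. Finally, the affine cubic $F_{t_2,t_3}=0$ is smooth, equivalently the one-variable cubic $x\mapsto F_{t_2,t_3}(x,0)$ has three distinct roots, if and only if $\Delta\neq 0$; and here these roots are the values of $\wp_\Lambda$ at the three nonzero $2$-torsion points of $\mathbb{C}/\Lambda$, which are pairwise distinct because $\wp_\Lambda$ has degree $2$, so $\Delta\neq 0$ holds automatically.

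For the isomorphism criterion, suppose first $t_i'=\kappa^i t_i$ for $i=2,3$; choosing $u\in\mathbb{C}^*$ with $u^2=\kappa$, the coordinate change $(x,y)\mapsto(u^2 x,u^3 y)$ carries $F_{t_2,t_3}=0$ to $F_{t_2',t_3'}=0$ and fixes $[0:1:0]$, so $E_{t_2,t_3}\cong E_{t_2',t_3'}$. Conversely, a biholomorphism $E_{t_2,t_3}\xrightarrow{\sim}E_{t_2',t_3'}$ of elliptic curves comes, in the uniformized picture, from a homothety $\Lambda'=u\Lambda$ of the corresponding lattices, under which $g_2$ and $g_3$ scale by $u^{-4}$ and $u^{-6}$; tracing through the rescalings from the previous step gives $t_i'=\kappa^i t_i$ with $\kappa=u^{-2}$. (Algebraically: such a $\varphi$ sends $O$ to $O'$, hence pulls $x',y'$ back into $L(2O)$ and $L(3O)$; writing $x'\circ\varphi=ax+b$ and $y'\circ\varphi=cy+dx+e$ and substituting into the two normalized equations forces $b=d=e=0$ and $c^2=a^3$, so $\varphi$ is exactly the scaling $(x,y)\mapsto(u^2x,u^3y)$ with $u=c/a$.)

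The genuine content — and the main obstacle — is the rigidity that pins down the normal form and its automorphism group: that $x^3$ and $y^2$ cannot be eliminated from the defining relation (a pole-order/dimension count), that completing the square and cube is available, i.e.\ that $\mathrm{char}\,\mathbb{C}\neq 2,3$, which is where these hypotheses are genuinely used, and, for the converse, that no lower-order terms survive in the change of coordinates so the only remaining freedom is the $(u^2,u^3)$-scaling. None of this is deep, but the bookkeeping in this last point is where one must be careful.
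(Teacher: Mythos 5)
Your argument is correct and is the standard one; note, though, that the paper offers no proof of this statement at all --- it is introduced with ``We begin by recalling the well-known Weierstrass form,'' so there is nothing to compare against and your write-up actually supplies what the paper leaves to the literature. Both of your routes (the $\wp$-function uniformization and the Riemann--Roch/pole-order argument) are legitimate, and you correctly identify the two points that carry the real content: the pole-order count forcing $y^2$ and $x^3$ into the relation, and the rigidity computation showing that an isomorphism preserving the origin must reduce to the scaling $(x,y)\mapsto(u^2x,u^3y)$, whence $t_2\mapsto u^4t_2$, $t_3\mapsto u^6t_3$ and $\kappa=u^2$. One caveat you have inherited from the paper rather than introduced: for the cubic $y^2=x^3-t_2x-t_3$ as written, smoothness is equivalent to the non-vanishing of $4t_2^3-27t_3^2$, not of $27t_3^2-t_2^3$ (e.g.\ $t_2=3$, $t_3=2$ gives the singular curve $y^2=(x+1)^2(x-2)$ while $27t_3^2-t_2^3=81\neq 0$); the paper's $\Delta$ matches the normalization $y^2=4x^3-t_2x-t_3$, so either the defining polynomial or $\Delta$ carries a typo in the statement, and your asserted equivalence ``smooth iff $\Delta\neq 0$'' is only true after fixing that normalization. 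Also, strictly speaking your converse assumes the isomorphism fixes the point at infinity; for isomorphisms of the underlying genus-one curves one first composes with a translation, which changes nothing. Neither issue is a gap in your reasoning.
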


\begin{lemma}\label{automorphism} 
Every automorphism of the elliptic curve $E_{t_1,t_2}$ is, in the affine chart given by $z\neq 0$, of the form $(x,y)\mapsto(\lambda^2x,\lambda^3y),$ where $\lambda\in \mathbb{C}^*$ is such that $\lambda^{2i}t_i=t_i$ for $i=1,2$.
\end{lemma}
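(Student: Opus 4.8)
The plan is to exploit the fact that an automorphism of an elliptic curve fixes the point at infinity $O=[0:1:0]$ (the unique point of $E_{t_2,t_3}$ outside the affine chart $z\neq 0$), and hence respects the filtration of the function field $\mathbb{C}(E)$ by order of pole at $O$. By Riemann--Roch, for $n\geq 1$ the space $L(nO)$ of rational functions with at worst a pole of order $n$ at $O$ and regular elsewhere has dimension $n$; since $x$ has a pole of order exactly $2$ at $O$ and $y$ a pole of order exactly $3$, we get $L(2O)=\langle 1,x\rangle$ and $L(3O)=\langle 1,x,y\rangle$. An automorphism $\phi$ being an isomorphism fixing $O$, the function $\phi^{*}x$ has a pole of order exactly $2$ at $O$ and no other poles, and likewise $\phi^{*}y$ a pole of order exactly $3$; therefore $\phi^{*}x=ax+b$ and $\phi^{*}y=cy+dx+e$ for scalars $a,b,c,d,e\in\mathbb{C}$ with $a,c\neq 0$.

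Next I would substitute these expressions into the Weierstrass equation $y^{2}=x^{3}-t_{2}x-t_{3}$ of Theorem~\ref{weierstrass} and expand both sides. The monomials $1,x,x^{2},x^{3},y,xy$ are linearly independent over $\mathbb{C}$ in $\mathbb{C}(E)$, since they have pairwise distinct pole orders $0,2,4,6,3,5$ at $O$; comparing the coefficients of $y$ and of $xy$ forces $e=0$ and $d=0$, comparing the coefficient of $x^{2}$ forces $b=0$, and comparing the coefficient of $x^{3}$ gives $c^{2}=a^{3}$. Setting $\lambda:=c/a\in\mathbb{C}^{*}$ then yields $a=\lambda^{2}$ and $c=\lambda^{3}$, so that $\phi$ is indeed of the form $(x,y)\mapsto(\lambda^{2}x,\lambda^{3}y)$.

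Finally, the two remaining relations, coming from the coefficients of $x$ and of $1$, read $\lambda^{6}t_{2}=\lambda^{2}t_{2}$ (equivalently $\lambda^{4}t_{2}=t_{2}$, since $\lambda\neq 0$) and $\lambda^{6}t_{3}=t_{3}$, that is, $\lambda^{2i}t_{i}=t_{i}$ for $i=2,3$; conversely, for any $\lambda\in\mathbb{C}^{*}$ satisfying these conditions the map $(x,y)\mapsto(\lambda^{2}x,\lambda^{3}y)$ is immediately seen to preserve the equation, to fix $O$, and to have inverse $(x,y)\mapsto(\lambda^{-2}x,\lambda^{-3}y)$, hence to be a biregular automorphism. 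The only non-formal ingredient is the Riemann--Roch computation identifying $L(2O)$ and $L(3O)$, together with the observation that $\phi^{*}$ preserves pole orders at $O$; once this is in place, the remainder is an elementary comparison of coefficients.
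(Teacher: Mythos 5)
Your proof is correct and complete. The paper itself dismisses this lemma as ``a straightforward computation'' and supplies no argument, so there is nothing to compare against; the route you take --- an automorphism fixes $O=[0:1:0]$, hence preserves pole orders there, so by Riemann--Roch $\phi^{*}x=ax+b$ and $\phi^{*}y=cy+dx+e$ with $a,c\neq 0$, after which comparison of coefficients in the basis $1,x,x^{2},x^{3},y,xy$ of $L(6O)$ forces $b=d=e=0$ and $c^{2}=a^{3}$ --- is the standard complete justification (it is the automorphism case of the classification of maps between Weierstrass equations). Two remarks on how your write-up interacts with the statement as printed. First, you correctly interpret ``automorphism of the elliptic curve'' as an automorphism fixing the base point; this is the right reading (translations would violate the stated form) and it is the hypothesis actually used later in the paper, where $\phi$ arises from an isomorphism of enhanced curves, but it is worth stating explicitly since the lemma does not say so. Second, your conclusion $\lambda^{2i}t_{i}=t_{i}$ for $i=2,3$ is the correct one; the lemma's ``$i=1,2$'' (and its subscript $E_{t_{1},t_{2}}$) is an indexing slip in the paper, consistent with the notation $E_{t_{2},t_{3}}$ of Theorem~\ref{weierstrass}, and your coefficient computation ($\lambda^{4}t_{2}=t_{2}$ from the $x$-coefficient, $\lambda^{6}t_{3}=t_{3}$ from the constant term) pins down the intended condition.
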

\begin{proof}
This is a straightforward computation.
\end{proof}

\begin{lemma}\label{second}
Let $k\in \mathbb{C}^{*}$, and $\phi:\mathbb{P}^2\rightarrow \mathbb{P}^2$ be the projective isomorphism given by $\phi
([x,y,z])=[k^2x,k^3y,z].$ Then, $\phi$ sends $E_{k^{-4}t_2,k^{-6}t_3}$ onto $E_{t_2,t_3}$. Furthermore, $\phi^*(\frac{dx}{y})=k^{-1}\frac{dx}{y}$ and $\phi^*(\frac{xdx}{y})=k\frac{xdx}{y}$.
\end{lemma}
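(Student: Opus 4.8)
The proof is a direct computation in affine coordinates, so I would organize it around the two assertions separately.

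For the first assertion, the plan is to work in the affine chart $z\neq 0$, where $\phi$ is the map $(x,y)\mapsto(k^2x,k^3y)$. Starting from a point $(x,y)$ on $E_{k^{-4}t_2,k^{-6}t_3}$, i.e. one satisfying $y^2=x^3-k^{-4}t_2x-k^{-6}t_3$, I would multiply this relation by $k^6$ and rearrange it to read $(k^3y)^2=(k^2x)^3-t_2(k^2x)-t_3$, which is precisely the statement that $\phi(x,y)$ lies on $E_{t_2,t_3}$. One then checks separately that the point at infinity $[0,1,0]$ is fixed by $\phi$, so that $\phi$ maps $E_{k^{-4}t_2,k^{-6}t_3}$ into $E_{t_2,t_3}$. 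Since $k\neq 0$, $\phi$ is a linear automorphism of $\mathbb{P}^2$, and the same computation applied to $\phi^{-1}\colon[x,y,z]\mapsto[k^{-2}x,k^{-3}y,z]$ gives the reverse inclusion; hence $\phi$ maps the first curve onto the second.

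For the second assertion, I would use that, under $\phi$, the affine coordinates $(X,Y)$ on $E_{t_2,t_3}$ pull back to $X=k^2x$ and $Y=k^3y$ on $E_{k^{-4}t_2,k^{-6}t_3}$, whence $dX=k^2\,dx$. Substituting, $\phi^*\!\left(\frac{dX}{Y}\right)=\frac{k^2\,dx}{k^3y}=k^{-1}\frac{dx}{y}$ and $\phi^*\!\left(\frac{X\,dX}{Y}\right)=\frac{(k^2x)(k^2\,dx)}{k^3y}=k\,\frac{x\,dx}{y}$, as claimed (here I am using the notational abuse of denoting the affine coordinates on both curves by the same letters).

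There is no real obstacle here: the only point requiring a little care is that an identity of rational $1$-forms verified on the affine chart $z\neq 0$ determines the corresponding identity on all of the smooth projective curve, and that $\frac{dx}{y}$ and $\frac{x\,dx}{y}$ are to be read as the algebraic $1$-forms representing the relevant de Rham classes — this is precisely what will make the lemma the computational input for describing the action of the rescaling $t_i\mapsto\kappa^it_i$ on the enhanced structure $(\alpha,\beta)$.
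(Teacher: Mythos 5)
Your proof is correct and is exactly the ``straightforward computation'' that the paper's own (one-line) proof alludes to: verifying the transformation of the Weierstrass relation in the affine chart and computing the pullback of $\frac{dx}{y}$ and $\frac{x\,dx}{y}$ via $dX=k^2\,dx$. No issues.
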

\begin{proof}
This is a straightforward computation.
\end{proof}

\begin{theorem}
$\mathsf{T}$ is isomorphic to  $\text{Spec}(\mathbb{C}[t_1,t_2,t_3,\frac{1}{27t_3^2-t_2^3}])$. 
\end{theorem}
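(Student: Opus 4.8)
The plan is to realize $U := \operatorname{Spec}(\mathbb{C}[t_1,t_2,t_3,\tfrac{1}{27t_3^2-t_2^3}])$ as a \emph{fine} moduli space of enhanced elliptic curves by writing the universal object on it explicitly. First I would assemble the de Rham data of the Weierstrass family $\{E_{t_2,t_3}\}$ of Theorem \ref{weierstrass}, viewed over $U_0 := \operatorname{Spec}(\mathbb{C}[t_2,t_3,\tfrac{1}{27t_3^2-t_2^3}])$: its relative de Rham cohomology is locally free of rank $2$ with global frame $\omega_1 = \tfrac{dx}{y}$, $\omega_2 = \tfrac{x\,dx}{y}$; the Hodge filtration has $F^1 = \mathcal{O}_{U_0}\,\omega_1$, spanned by the relative holomorphic differential; and the cup-product (Poincaré) pairing $\langle\,\cdot\,,\,\cdot\,\rangle$ is alternating, with $\langle\omega_1,\omega_2\rangle$ a unit in $\mathcal{O}(U_0)$ because Poincaré duality for smooth proper curves is perfect. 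In fact $\langle\omega_1,\omega_2\rangle$ is a nonzero constant: by Lemma \ref{second} the rescaling $(t_2,t_3)\mapsto(\kappa^{4}t_2,\kappa^{6}t_3)$ multiplies $\omega_1$ by $\kappa^{-1}$ and $\omega_2$ by $\kappa$, hence leaves $\langle\omega_1,\omega_2\rangle$ invariant, whereas $27t_3^2-t_2^3$ gets multiplied by $\kappa^{12}$; a unit of $\mathbb{C}[t_2,t_3,\tfrac{1}{27t_3^2-t_2^3}]$ that is invariant under this rescaling can carry no power of the (irreducible) discriminant and so lies in $\mathbb{C}^\times$. Granting this, I would take as the family over $U = U_0\times\mathbb{A}^1$ the pullback of the Weierstrass family together with the marked classes $\alpha := \omega_1$ and $\beta := t_1\,\omega_1 + \langle\omega_1,\omega_2\rangle^{-1}\,\omega_2$; then $\alpha\in F^1$ and, by antisymmetry, $\langle\alpha,\beta\rangle = \langle\omega_1,\omega_2\rangle^{-1}\langle\omega_1,\omega_2\rangle = 1$, so this is a family of enhanced elliptic curves over $U$ and hence induces a morphism $\Phi\colon U\to\mathsf{T}$.

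Second I would show $\Phi$ is a bijection on $\mathbb{C}$-points. Given an enhanced elliptic curve $(E,\alpha,\beta)$, the relation $\langle\alpha,\beta\rangle=1$ forces $\alpha\neq 0$, so $(E,\alpha)$ is a pair consisting of an elliptic curve and a nonzero holomorphic differential. By Theorem \ref{weierstrass} and Lemma \ref{second}, such a pair is isomorphic to $(E_{t_2,t_3},\omega_1)$ for a \emph{unique} $(t_2,t_3)$ with $27t_3^2-t_2^3\neq 0$: existence is Theorem \ref{weierstrass} followed by the rescaling of Lemma \ref{second} that normalizes the coefficient of $\omega_1$ to $1$, and uniqueness holds because any isomorphism of Weierstrass curves carrying $\omega_1$ to $\omega_1$ is the identity (Lemma \ref{automorphism}). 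Having fixed this presentation, write $\beta = t_1\,\omega_1 + u\,\omega_2$ in the frame; the constraint $\langle\alpha,\beta\rangle=1$ pins $u = \langle\omega_1,\omega_2\rangle^{-1}$ and leaves $t_1\in\mathbb{C}$ unconstrained. Thus $(t_1,t_2,t_3)$ is the unique preimage of $(E,\alpha,\beta)$ under $\Phi$.

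To promote this to an isomorphism of schemes I would check that $(U,(\mathcal{E}_U,\alpha,\beta))$ is a universal family, whence $\mathsf{T}\cong U$ by Yoneda. The conceptual point is rigidity: an enhanced elliptic curve has no nontrivial automorphisms, since by Lemma \ref{automorphism} an automorphism of $E_{t_2,t_3}$ scales $\omega_1$ by a root of unity, and fixing the nonzero $\alpha$ forces it to be the identity. Given an arbitrary family $(\mathcal{E}/S,\alpha,\beta)$, the nowhere-vanishing section $\alpha$ trivializes the line bundle $\pi_*\Omega^1_{\mathcal{E}/S}$, and the standard theory of Weierstrass equations over the $\mathbb{Q}$-algebra $\mathcal{O}(S)$ then yields a global Weierstrass model $E_{t_2,t_3}$ over $S$ with $t_2,t_3\in\mathcal{O}(S)$, $27t_3^2-t_2^3\in\mathcal{O}(S)^\times$, and $\alpha = \omega_1$; writing $\beta = t_1\omega_1 + u\omega_2$, perfectness of the pairing forces $u = \langle\omega_1,\omega_2\rangle^{-1}$ and $t_1\in\mathcal{O}(S)$. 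The resulting morphism $(t_1,t_2,t_3)\colon S\to U$ pulls $(\mathcal{E}_U,\alpha,\beta)$ back to the given family, and it is unique by the rigidity above; so the object on $U$ is universal, proving both that $\mathsf{T}$ is representable and that it is $U$.

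I expect the only genuinely delicate step to be producing the inverse \emph{algebraically} — the existence and uniqueness of the relative Weierstrass model rigidified by $\alpha$, so that nothing beyond $27t_3^2-t_2^3$ needs to be inverted in the structure ring. The remaining ingredients (the de Rham frame, the Hodge filtration, and the bookkeeping with the $\kappa$-rescaling in Lemmas \ref{automorphism} and \ref{second}) are routine, as those lemmas already indicate.
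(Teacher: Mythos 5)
Your argument is correct, and at the level of $\mathbb{C}$-points it is the same argument as the paper's: put the curve in Weierstrass form, use the weighted $\mathbb{C}^\times$-rescaling of Lemma \ref{second} to normalize $\alpha$ to $\frac{dx}{y}$ (thereby pinning down $(t_2,t_3)$ uniquely, via Lemma \ref{automorphism}), and observe that the residual freedom in $\beta$ compatible with $\langle\alpha,\beta\rangle=1$ is exactly the translation parameter $t_1$ — this is precisely the paper's reduction of the change-of-basis matrix to $\begin{psmallmatrix}1&0\\t_1&1\end{psmallmatrix}$ followed by its surjectivity/injectivity check. Where you genuinely add something is in two places. First, you derive that $\langle\frac{dx}{y},\frac{x\,dx}{y}\rangle$ is a nonzero \emph{constant} from perfectness of the de Rham pairing plus invariance under the $(\kappa^{4},\kappa^{6})$-rescaling (a unit of $\mathbb{C}[t_2,t_3,\Delta^{-1}]$ invariant under that action must be scalar), whereas the paper simply cites the value $1$ from a reference; your argument is self-contained and suffices for the theorem, at the cost of a harmless unit $\langle\omega_1,\omega_2\rangle^{-1}$ in the parametrization. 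Second, you upgrade the bijection of points to representability of the moduli functor via rigidity and a relative Weierstrass model over an arbitrary base; the paper's proof is silent on the scheme structure of $\mathsf{T}$ and only proves the set-theoretic bijection, so this is a strengthening rather than a redundancy. The one step you assert without proof — existence and uniqueness of the $\alpha$-rigidified Weierstrass model over an arbitrary $\mathbb{C}$-algebra with only $27t_3^2-t_2^3$ inverted — is indeed standard (Deligne's formulaire, or Katz--Mazur), but since you yourself identify it as the delicate point, a precise citation there would close the argument; as written it is the only gap, and it is a citable one rather than a conceptual one.
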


\begin{proof}

Let us observe that for each $(t_2,t_3)\in \mathbb{C}$ with $\Delta\neq 0$, we have an enhanced elliptic curve $(E_{t_2,t_3},\frac{dx}{y},\frac{xdx}{y})$, since $\langle \frac{dx}{y},\frac{xdx}{y}\rangle=1$ (for the last equality see \parencite[\S 2.10]{mov12}). Then, every other enhanced elliptic curve $(E,\alpha,\beta)$ with $E\cong E_{t_2,t_3}$ is obtained by means of a change of basis matrix $S=\begin{psmallmatrix}s_1 & 0\\s_2 & s_3\end{psmallmatrix}$ such that 

\begin{equation}\label{defini}
\begin{pmatrix}\alpha \\ \beta\end{pmatrix} = \begin{pmatrix}s_1 & 0\\s_2 & s_3\end{pmatrix} \begin{pmatrix}\dfrac{dx}{y}\\ \dfrac{x\,dx}{y}\end{pmatrix}.
\end{equation}

Such an $S$ satisfies \begin{equation}\label{cond}S\begin{psmallmatrix}0 & 1\\  -1 & 0\end{psmallmatrix}S^T=\begin{psmallmatrix}0 & 1\\  -1 & 0\end{psmallmatrix}.\end{equation} Reciprocally, any $\alpha,\beta$ defined through equation \ref{defini}, by means of an $S$ satisfying condition \ref{cond}, satisfies Definition \ref{enhanced}. Observe now that condition \ref{cond} is equivalent to $s_1s_3=0$. Therefore, any such $S$ is of the form $\begin{psmallmatrix}s & 0\\  t & s^{-1}\end{psmallmatrix},$ with $s\in\mathbb{C}^{*}$.

Let $\Psi: \text{Spec}(\mathbb{C}[t_1,t_2,t_3,\frac{1}{27t_3^2-t_2^3}])\rightarrow \mathsf{T}$ be defined by $(t_1,t_2,t_3)\mapsto (E_{t_2,t_3},\begin{psmallmatrix}1 & 0\\  t_1 & 1\end{psmallmatrix}\begin{psmallmatrix}\frac{dx}{y} \\ \frac{xdx}{y}\end{psmallmatrix})$. We affirm that $\Psi$ is bijective.

To prove surjectivity, pbserve Lemma \ref{second} implies that any given enhanced elliptic curve $(E_{t_1,t_2,t_3},\begin{psmallmatrix}s & 0\\  t & s^{-1}\end{psmallmatrix}\begin{psmallmatrix}\frac{dx}{y} \\ \frac{xdx}{y}\end{psmallmatrix})$ is isomorphic to $(E_{s^4t_2,s^6t_3},\begin{psmallmatrix}1 & 0\\  ts^{-1} & 1\end{psmallmatrix}\begin{psmallmatrix}\frac{dx}{y} \\ \frac{xdx}{y}\end{psmallmatrix}).$ Next, we deal with injectivity. Let us assume that $(E_{t_2,t_3},\begin{psmallmatrix}1 & 0\\  t_1 & 1\end{psmallmatrix}\begin{psmallmatrix}\frac{dx}{y} \\ \frac{xdx}{y}\end{psmallmatrix})\cong
(E_{t_2',t_3'},\begin{psmallmatrix}1 & 0\\  t_1' & 1\end{psmallmatrix}\begin{psmallmatrix}\frac{dx}{y} \\ \frac{xdx}{y}\end{psmallmatrix})$. Then, $E_{t_2,t_3}\cong E_{t_2',t_3'}$, and, by Theorem \ref{weierstrass}, there exists $\kappa\in\mathbb{C}$ such that $t_i'=\kappa^it_i$ for $i=2,3.$ By Lemma \ref{second}, we have that $(E_{\kappa^2t_2,\kappa^3t_3},\begin{psmallmatrix}1 & 0\\  t_1' & 1\end{psmallmatrix}\begin{psmallmatrix}\frac{dx}{y} \\ \frac{xdx}{y}\end{psmallmatrix})\cong
(E_{t_2,t_3},\begin{psmallmatrix}1 & 0\\  t_1' & 1\end{psmallmatrix}\begin{psmallmatrix}\lambda\frac{dx}{y} \\ \lambda^{-1}\frac{xdx}{y}\end{psmallmatrix})$ for any square root $\lambda\in\mathbb{C}$ of $\kappa$. Therefore, we have that $(E_{t_2,t_3},\begin{psmallmatrix}1 & 0\\  t_1 & 1\end{psmallmatrix}\begin{psmallmatrix}\frac{dx}{y} \\ \frac{xdx}{y}\end{psmallmatrix})\cong (E_{t_2,t_3},\begin{psmallmatrix}1 & 0\\  t_1' & 1\end{psmallmatrix}\begin{psmallmatrix}\lambda\frac{dx}{y} \\ \lambda^{-1}\frac{xdx}{y}\end{psmallmatrix})$. Let $\phi$ be such an automorphism. By Lemma \ref{automorphism}, $\phi(x,y)=(\mu^2x,\mu^3y)$ for some $\mu\in \mathbb{C}$ with $\mu^{2i}t_i=t_i$. Then, $\lambda\frac{dx}{y}=\phi^*(\frac{dx}{y})=\mu^{-1}\frac{dx}{y}$ and $t_1'\lambda\frac{dx}{y}+\lambda^{-1}\frac{xdx}{y}=\phi^*(t_1\frac{dx}{y}+\frac{xdx}{y})=t_1\mu^{-1}\frac{dx}{y}+\mu\frac{xdx}{y}$. Therefore, $\lambda=\mu^{-1}$, and $t_1'\lambda=t_1\lambda$, which implies $t_1=t_1'$ and $t_i=\mu^{-2i}t_i=\lambda^{2i}t_i=\kappa^it_i=t_i'$ for $i=1,2$. This concludes the proof. 
\end{proof}

\begin{remark} Since $(E_{t_2,t_3},\begin{psmallmatrix}1 & 0\\  t_1 & 1\end{psmallmatrix}\begin{psmallmatrix}\frac{dx}{y} \\ \frac{xdx}{y}\end{psmallmatrix})\cong (E_{t_1,t_2,t_3},\frac{dx}{y},\frac{xdx}{y})$, where  $E_{t_1,t_2,t_3}$ is given in affine coordinates as the zero-set of the polynomial $y^2-(x-t_1)^3+t_2(x-t_1)+t_3$, we see that the family \begin{equation}(E_{t_1,t_2,t_3},\frac{dx}{y},\frac{xdx}{y})), t_1,t_2,t_2\in\mathbb{C}, \Delta\neq 0\end{equation}is an universal family of enhanced elliptic curves. 
\end{remark}\label{remark}
To obtain quasi-modular forms as functions on $\mathbb{H},$ we need to make some trascendental considerations. 

\begin{proposition}
Let $(E,\alpha,\beta)$ be an enhanced elliptic curve and $\delta,\epsilon$ a basis of $H_1(E,\mathbb{Z})$ with $\langle \delta,\epsilon\rangle=1$. Let $P:=\begin{psmallmatrix}\int_{\delta}\alpha & \int_{\delta}\beta\\\int_{\epsilon}\alpha & \int_{\epsilon}\beta\end{psmallmatrix}.$  Then, $\begin{psmallmatrix} 0 & 1 \\ -1& 0\end{psmallmatrix}=P^T\begin{psmallmatrix} 0 & 1 \\ -1& 0\end{psmallmatrix}P$, $P\in \mathsf{GL}_2(\mathbb{C})$ and $(P^1)^T\begin{psmallmatrix} 0 & 1 \\ -1& 0\end{psmallmatrix}\overline{P^1}>0$.
\end{proposition}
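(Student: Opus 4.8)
The plan is to derive all three assertions from the Riemann bilinear relations, which express the cup-product pairing on $H^1_{dR}(E/\mathbb{C})$ in terms of periods over a symplectic basis of $H_1(E,\mathbb{Z})$. Concretely, for closed $1$-forms $\omega,\eta$ on $E$ and a basis $\delta,\epsilon$ of $H_1(E,\mathbb{Z})$ with $\langle\delta,\epsilon\rangle=1$, one has
\[
\int_E\omega\wedge\eta=\int_\delta\omega\int_\epsilon\eta-\int_\epsilon\omega\int_\delta\eta ,
\]
and the pairing $\langle\cdot,\cdot\rangle$ on $H^1_{dR}(E/\mathbb{C})$ appearing in Definition \ref{enhanced} is, up to the normalization recorded in \parencite{mov12}, exactly $(\omega,\eta)\mapsto\int_E\omega\wedge\eta$. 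The key observation is that the right-hand side above, applied to $(\omega,\eta)=(\alpha,\beta)$ with the entries arranged as in $P$, is precisely $\det P$.

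First I would treat the identity $J=P^TJP$ and $P\in\mathsf{GL}_2(\mathbb{C})$ together. A direct computation gives $M^TJM=(\det M)\,J$ for every $2\times2$ matrix $M$, where $J=\begin{psmallmatrix}0&1\\-1&0\end{psmallmatrix}$; so it suffices to show $\det P=1$. By the Riemann bilinear relation and the defining condition $\langle\alpha,\beta\rangle=1$ of an enhanced elliptic curve,
\[
\det P=\int_\delta\alpha\int_\epsilon\beta-\int_\epsilon\alpha\int_\delta\beta=\int_E\alpha\wedge\beta=\langle\alpha,\beta\rangle=1 ,
\]
whence $P\in\mathsf{SL}_2(\mathbb{C})\subseteq\mathsf{GL}_2(\mathbb{C})$ and $P^TJP=(\det P)\,J=J$. (Invertibility can also be seen directly: $\langle\alpha,\beta\rangle\neq0$ forces $\alpha,\beta$ to be a basis of $H^1_{dR}(E/\mathbb{C})$, so $P$ is the matrix of the period comparison isomorphism in the bases $\{\alpha,\beta\}$ and $\{\delta,\epsilon\}$.)

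For the positivity, let $P^1=\begin{psmallmatrix}\int_\delta\alpha\\\int_\epsilon\alpha\end{psmallmatrix}$ denote the first column of $P$, i.e.\ the vector of periods of $\alpha$, and note that $\bar\alpha$ has periods $\overline{\int_\delta\alpha},\overline{\int_\epsilon\alpha}$. Expanding the product and applying the Riemann bilinear relation to $(\omega,\eta)=(\alpha,\bar\alpha)$ gives
\[
(P^1)^TJ\,\overline{P^1}=\int_\delta\alpha\,\overline{\int_\epsilon\alpha}-\int_\epsilon\alpha\,\overline{\int_\delta\alpha}=\int_E\alpha\wedge\bar\alpha .
\]
Since $\alpha\in F^1H^1_{dR}(E/\mathbb{C})=H^0(E,\Omega^1_E)$ is a nonzero holomorphic $1$-form (nonzero because $\langle\alpha,\beta\rangle=1$), writing it locally as $f\,dz$ gives $\alpha\wedge\bar\alpha=|f|^2\,dz\wedge d\bar z=-2\sqrt{-1}\,|f|^2\,dx\wedge dy$, so integrating over $E$ with its complex orientation shows that $\int_E\alpha\wedge\bar\alpha$ is a nonzero purely imaginary number of a definite sign. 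This is exactly the required positivity — it is the second Riemann bilinear relation, i.e.\ the positivity of the Hodge–Riemann form on $H^{1,0}(E)$.

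There is no essential obstacle: everything reduces to the Riemann bilinear relations together with the elementary identity $M^TJM=(\det M)J$. The only point requiring genuine care is the bookkeeping of signs and orientations in the last step — matching $\int_E\alpha\wedge\bar\alpha$ with the stated inequality ``$>0$'' depends on the orientation convention for $E$, on the sign of the intersection pairing normalized by $\langle\delta,\epsilon\rangle=1$, and on the exact normalization of $\langle\cdot,\cdot\rangle$ on $H^1_{dR}(E/\mathbb{C})$ fixed in \parencite{mov12}, and one simply has to track these consistently.
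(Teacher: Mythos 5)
Your proof is correct and rests on the same underlying input as the paper's, namely the Riemann bilinear relations: the first relation for the symplectic identity and the second for the positivity (which the paper likewise only cites without tracking signs). The one genuine difference is in how $P^T\begin{psmallmatrix}0&1\\-1&0\end{psmallmatrix}P=\begin{psmallmatrix}0&1\\-1&0\end{psmallmatrix}$ is obtained: the paper computes the Gram matrix of $(\alpha,\beta)$ by expressing $\begin{psmallmatrix}\alpha\\\beta\end{psmallmatrix}$ in terms of the Poincar\'e duals of $\delta,\epsilon$, getting the matrix identity first and then $\det P\neq 0$ as a consequence; you instead establish $\det P=\langle\alpha,\beta\rangle=1$ directly and invoke the identity $M^T\begin{psmallmatrix}0&1\\-1&0\end{psmallmatrix}M=(\det M)\begin{psmallmatrix}0&1\\-1&0\end{psmallmatrix}$, which is special to $2\times 2$ matrices. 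Your route is more elementary and yields the sharper conclusion $\det P=1$ immediately, while the paper's Gram-matrix computation is the one that generalizes to higher-genus period matrices. Your caution about the last step is warranted: as written, $(P^1)^T\begin{psmallmatrix}0&1\\-1&0\end{psmallmatrix}\overline{P^1}$ is purely imaginary (on the image of the mirror map it equals $-2i\,\mathrm{Im}(z)$), so the stated inequality ``$>0$'' has to be read with the normalization implicit in the paper's conventions; tracking that factor is the only loose end in your argument, and it is equally loose in the paper's own one-line appeal to the bilinear relations.
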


\begin{proof}
First, we proof that $\begin{psmallmatrix} 0 & 1 \\ -1& 0\end{psmallmatrix}=P^{T}\begin{psmallmatrix} 0 & 1 \\ -1& 0\end{psmallmatrix} P$. Since $\begin{psmallmatrix} \alpha \\ \beta\end{psmallmatrix}=P^T\begin{psmallmatrix} 0 & 1 \\ -1& 0\end{psmallmatrix}\begin{psmallmatrix} \delta \\ \epsilon\end{psmallmatrix}^{pd},$ then $\begin{psmallmatrix} 0 & 1 \\ -1& 0\end{psmallmatrix}=\alpha\alpha^T=P^T\begin{psmallmatrix} 0 & 1 \\ -1& 0\end{psmallmatrix}\delta^{pd}(\delta^{pd})^T\begin{psmallmatrix} 0 & 1 \\ -1& 0\end{psmallmatrix}^TP=P^T\begin{psmallmatrix} 0 & 1 \\ -1& 0\end{psmallmatrix}\begin{psmallmatrix} 0 & 1 \\ -1& 0\end{psmallmatrix}\begin{psmallmatrix} 0 & 1 \\ -1& 0\end{psmallmatrix}^{T}P=P^T \begin{psmallmatrix} 0 & 1 \\ -1& 0\end{psmallmatrix}P.$ Observe that, since $det\begin{psmallmatrix} 0 & 1 \\ -1& 0\end{psmallmatrix}\neq 0$, this implies that $det(P)\neq 0$. Finally, the last assertion in this proposition follows from the Riemann bilinear relations.
\end{proof}

The previous proposition suggests the following definition:
\begin{definition}
The \textbf{manifold of period matrices} is 
\begin{equation*}\mathsf{\Pi}=\Big\{P\in \mathsf{GL}_2(\mathbb{C})|\, \begin{psmallmatrix} 0 & 1 \\ -1& 0\end{psmallmatrix}=P^{T}\begin{psmallmatrix} 0 & 1 \\ -1& 0\end{psmallmatrix} P \text{ and } (P^1)^T\begin{psmallmatrix} 0 & 1 \\ -1& 0\end{psmallmatrix}\overline{P^1}>0\Big\}.
\end{equation*}
\end{definition}

\begin{remark}
By using standard differential geometric arguments, it can be proved that $\mathsf{\Pi}$ is a smooth complex manifold of dimension $3$, and that for any $P\in \mathsf{\Pi},$ $T_P\mathsf{\Pi}=\{X\in\mathsf{Mat}_2(\mathbb{C})|\, P^T\begin{psmallmatrix} 0 & 1 \\ -1& 0\end{psmallmatrix} X+X^T\begin{psmallmatrix} 0 & 1 \\ -1& 0\end{psmallmatrix} P=0\}$.
\end{remark}

\begin{definition}
The \textbf{generalized period map} is the map $\mathcal{P}:\mathsf{T}\rightarrow \mathsf{SL}_2(\mathbb{Z})\backslash \mathsf{\Pi}$ which around a given $t=(E_t,\alpha_t,\beta_t)\in\mathsf{T}$ is defined by considering a local continuous family homology classes $\delta(t),\epsilon(t)$ basis of $H_1(E_t,\mathbb{Z})$ with $\langle \delta(t),\epsilon(t)\rangle=1$, and making $$\mathcal{P}(t)=\begin{psmallmatrix}\int_{\delta(t)}\alpha(t) & \int_{\delta(t)}\beta(t)\\\int_{\epsilon(t)}\alpha(t) & \int_{\epsilon(t)}\beta(t)\end{psmallmatrix}.$$
\end{definition}

\begin{theorem}
$\mathcal{P}$ is a biholomorphism.
\end{theorem}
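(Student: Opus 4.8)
The plan is to exhibit $\mathcal{P}$ as a holomorphic bijection between complex manifolds of the same dimension, and then to conclude via the general fact that an injective holomorphic map between complex manifolds of equal dimension is an open embedding, so that a bijective such map is automatically a biholomorphism. Throughout I write $J=\begin{psmallmatrix}0 & 1\\ -1 & 0\end{psmallmatrix}$.

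I would first deal with the target. A change of the basis $\delta(t),\epsilon(t)$ of $H_1(E_t,\mathbb{Z})$ that keeps $\langle\delta(t),\epsilon(t)\rangle=1$ is given by an element $A\in\mathsf{SL}_2(\mathbb{Z})$ (the condition $A^TJA=J$ being, for $2\times2$ matrices, equivalent to $\det A=1$), and it replaces the period matrix $P$ by $AP$; this shows $\mathcal{P}$ is well defined into $\mathsf{SL}_2(\mathbb{Z})\backslash\mathsf{\Pi}$. Since $A$ is real, left multiplication by $A$ preserves both defining conditions of $\mathsf{\Pi}$; it is free, because $AP=P$ with $P$ invertible forces $A=I$, and it is properly discontinuous (covering the usual action on $\mathbb{H}$ via the ratio of the two entries of the first column), so $\mathsf{SL}_2(\mathbb{Z})\backslash\mathsf{\Pi}$ is a complex $3$-manifold, matching $\dim\mathsf{T}=3$. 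Holomorphy of $\mathcal{P}$ is then routine: over the universal family $(E_{t_1,t_2,t_3},\tfrac{dx}{y},\tfrac{x\,dx}{y})$ constructed above and a local flat (Gauss--Manin) frame $\delta(t),\epsilon(t)$, the entries of $\mathcal{P}$ are period integrals of the algebraic $1$-forms representing $\alpha,\beta$, which depend holomorphically on $t$ by differentiation under the integral sign. (Alternatively, one can compute $d\mathcal{P}$ directly in terms of the Gauss--Manin connection and check it is nonsingular, giving a more hands-on proof of the biholomorphism property.)

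For surjectivity, take $P\in\mathsf{\Pi}$. The identity $P^TJP=(\det P)J$ shows the first defining relation amounts to $\det P=1$, while the positivity condition forces the ratio of the two entries of the first column $P^1=\begin{psmallmatrix}a\\c\end{psmallmatrix}$ to lie in $\mathbb{H}$ (with the usual sign convention); hence $\Lambda:=\mathbb{Z}a+\mathbb{Z}c$ is a lattice. Put $E:=\mathbb{C}/\Lambda$, $\alpha:=dz\in F^1H_{dR}^1(E/\mathbb{C})$, and let $\beta\in H_{dR}^1(E/\mathbb{C})$ be the unique class whose periods along the cycles dual to the basis $(a,c)$ of $\Lambda$ are the two entries of $P^2$; such a $\beta$ exists and is unique because the period pairing $H_{dR}^1(E/\mathbb{C})\times H_1(E,\mathbb{Z})\to\mathbb{C}$ is perfect. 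By the Riemann bilinear relations the cup product is computed on periods as $\langle\alpha,\beta\rangle=(P^1)^TJP^2$, which is the $(1,2)$-entry of $P^TJP=J$, hence equals $1$. Thus $(E,\alpha,\beta)\in\mathsf{T}$, and, taking the basis of $H_1(E,\mathbb{Z})$ dual to $(a,c)$ (with orientation pinned down by the positivity condition), $\mathcal{P}(E,\alpha,\beta)=[P]$.

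For injectivity, suppose $\mathcal{P}(E,\alpha,\beta)=\mathcal{P}(E',\alpha',\beta')$; after adjusting the two symplectic bases of $H_1$ by $\mathsf{SL}_2(\mathbb{Z})$ we may assume the period matrices are literally equal, $P=P'$. The common first column determines, through the Abel--Jacobi map of $\alpha$ (resp.\ $\alpha'$), an isomorphism $E\cong\mathbb{C}/\Lambda\cong E'$ sending both $\alpha$ and $\alpha'$ to $dz$ and identifying the chosen homology bases; transporting $\beta$ and $\beta'$ to $\mathbb{C}/\Lambda$, they have equal periods and hence coincide, so $(E,\alpha,\beta)\cong(E',\alpha',\beta')$. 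Together with holomorphy this finishes the proof. The step I expect to require the most care is surjectivity, and within it the identity $\langle\alpha,\beta\rangle=(P^1)^TJP^2$ coming from the Riemann bilinear relations, together with the bookkeeping of sign and orientation conventions relating the defining conditions of $\mathsf{\Pi}$, the intersection form on $H_1$, and the cup product on $H_{dR}^1$ --- and the verification that the residual ambiguity in the homology basis is precisely what the $\mathsf{SL}_2(\mathbb{Z})$-quotient absorbs.
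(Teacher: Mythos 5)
Your proof is correct, but it takes a genuinely different and far more explicit route than the paper, whose entire proof is the single sentence that the statement ``is an easy consequence of the Torelli theorem for curves.'' What you do instead is essentially prove the relevant (genus-one) case of Torelli by hand: you unwind the two defining conditions of $\mathsf{\Pi}$ into $\det P=1$ plus the statement that the first column spans a lattice $\Lambda$, build the inverse map explicitly as $P\mapsto(\mathbb{C}/\Lambda,\,dz,\,\beta)$ with $\beta$ pinned down by its periods via the perfect period pairing, verify $\langle\alpha,\beta\rangle=1$ from the $(1,2)$-entry of $P^TJP=J$ through the Riemann bilinear relations, and get injectivity from the Abel--Jacobi uniformization. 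You then conclude biholomorphy from the general fact that an injective holomorphic map between equidimensional complex manifolds is an open embedding. This buys self-containedness and makes visible exactly where each hypothesis (the symplectic condition, the positivity condition, the $\mathsf{SL}_2(\mathbb{Z})$-quotient) is used, at the cost of length and of some sign/orientation bookkeeping that you correctly flag as the delicate point; the paper's appeal to Torelli buys brevity but hides all of this, including the (nontrivial, and unaddressed in the paper) verification that $\mathsf{SL}_2(\mathbb{Z})\backslash\mathsf{\Pi}$ is a manifold, which you at least sketch via freeness and proper discontinuity of the action. Either argument is acceptable; yours is the one a reader could actually check.
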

\begin{proof}
This is an easy consequence of the Torelli theorem for curves,
\end{proof}

\begin{definition} We define the complex algebraic group
\begin{equation*}
\mathsf{G}=\{g\in \mathsf{Mat}_{2}(\mathbb{C})\, | \, g^T\begin{psmallmatrix} 0 & 1 \\ -1& 0\end{psmallmatrix} g=\begin{psmallmatrix} 0 & 1 \\ -1& 0\end{psmallmatrix}\land g_{21}=0 \}.
\end{equation*}
\end{definition}
\begin{remark}
Observe that $\mathsf{G}$ acts on the right of $\mathsf{T}$ by means of $(E,\begin{psmallmatrix}\alpha\\\beta\end{psmallmatrix})\cdot g= (E,g^T\begin{psmallmatrix}\alpha\\\beta\end{psmallmatrix})$, and $\mathsf{T}/\mathsf{G}$ is isomorphic to the moduli space $\mathcal{M}$ of complex elliptic curves.
\end{remark}

\begin{definition}
The AMSY Lie algebra $\mathfrak{G}$ associated to elliptic curves is by definition the Lie sub algebra of $\mathfrak{gl}_2(\mathbb{C})$ generated by $\text{Lie}(\mathsf{G})$ and $x^T$ for every $x$ in the nilradical of $\text{Lie}(\mathsf{G})$.
\end{definition}

\begin{remark}
The previous algebra has also been called the Gauss-Manin Lie algebra in the references.
\end{remark}

\begin{proposition}
$\text{Lie}(\mathsf{G})$ is the Lie subalgebra of $\mathfrak{gl}_2(\mathbb{C})$ freely generated by $\mathfrak{g}_0=\begin{psmallmatrix} 1 & 0 \\ 0& -1\end{psmallmatrix}$ and $\mathfrak{g_1}=\begin{psmallmatrix} 0 & 1 \\ 0& 0\end{psmallmatrix}$. 
\end{proposition}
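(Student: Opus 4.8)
The plan is to compute $\mathrm{Lie}(\mathsf{G})$ as the tangent space of $\mathsf{G}$ at the identity and then exhibit $\{\mathfrak g_0,\mathfrak g_1\}$ as a basis. Write $J=\begin{psmallmatrix} 0 & 1\\ -1 & 0\end{psmallmatrix}$. The first step is to recall the elementary $2\times 2$ identity $g^{T}Jg=\det(g)\,J$, valid for every $g\in\mathsf{Mat}_2(\mathbb{C})$; it shows that the symplectic-type condition $g^{T}Jg=J$ is equivalent to $\det(g)=1$. Hence $\mathsf{G}$ is exactly the group of upper-triangular matrices of determinant $1$, i.e. $g=\begin{psmallmatrix} a & b\\ 0 & a^{-1}\end{psmallmatrix}$ with $a\in\mathbb{C}^{*},\ b\in\mathbb{C}$; this is a smooth connected affine algebraic group of dimension $2$ (smoothness being automatic in characteristic zero, and the dimension being visible from the parametrization $(a,b)\in\mathbb{C}^{*}\times\mathbb{C}$).

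Next I would linearize the two defining equations at $g=I$. Substituting $g=I+\varepsilon X$ into $g^{T}Jg=J$ and keeping the first-order term gives $X^{T}J+JX=0$; a one-line matrix multiplication shows that for $X=\begin{psmallmatrix} p & q\\ r & s\end{psmallmatrix}$ one has $X^{T}J+JX=(p+s)J$, so the condition is precisely $\mathrm{tr}(X)=0$. The condition $g_{21}=0$ linearizes to $X_{21}=0$. Therefore
\[
\mathrm{Lie}(\mathsf{G})=\{X\in\mathfrak{gl}_2(\mathbb{C})\;:\;\mathrm{tr}(X)=0,\ X_{21}=0\},
\]
and since $\dim_{\mathbb{C}}\mathsf{G}=2$ this two-dimensional space is the full tangent space, so no tangent directions are missed.

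Finally, every $X$ in this space has the form $\begin{psmallmatrix} p & q\\ 0 & -p\end{psmallmatrix}=p\,\mathfrak g_0+q\,\mathfrak g_1$, so $\mathfrak g_0$ and $\mathfrak g_1$ span $\mathrm{Lie}(\mathsf{G})$ and are manifestly linearly independent, hence form a basis; the span is automatically closed under the bracket, and one checks $[\mathfrak g_0,\mathfrak g_1]=2\mathfrak g_1$, so it is genuinely non-abelian. Since a two-dimensional Lie algebra cannot be free on two generators in the free-Lie-algebra sense, ``freely generated'' here is read as the assertion that $\mathfrak g_0,\mathfrak g_1$ constitute a free generating set, i.e. a basis, with the bracket determined by the single relation above. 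There is no genuine obstacle in this argument; the only points requiring a little care are invoking $g^{T}Jg=\det(g)J$ to identify the symplectic-type condition with $\det=1$, and confirming that the dimension count closes — that is, that $\mathsf{G}$ is smooth of dimension $2$, which holds because it is an algebraic group in characteristic zero.
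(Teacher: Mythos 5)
Your proof is correct and follows essentially the same route as the paper: linearize the defining equations $g^TJg=J$ and $g_{21}=0$ at the identity to get $\mathrm{tr}(X)=0$ and $X_{21}=0$, hence the span of $\mathfrak g_0,\mathfrak g_1$ (the paper states this linearization directly, with an apparent typo writing $\begin{psmallmatrix} 1 & 0 \\ 0 & -1\end{psmallmatrix}$ in place of $J=\begin{psmallmatrix} 0 & 1 \\ -1 & 0\end{psmallmatrix}$ and $\mathsf{Mat}_5$ for $\mathsf{Mat}_2$). Your additional observations --- that $g^TJg=\det(g)J$ identifies $\mathsf{G}$ with the upper-triangular Borel of $\mathrm{SL}_2(\mathbb{C})$, and that ``freely generated'' can only be read as ``is a basis'' since the free Lie algebra on two generators is infinite-dimensional whereas here $[\mathfrak g_0,\mathfrak g_1]=2\mathfrak g_1$ --- are both correct and clarify the statement.
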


\begin{proof}
This follows by observing that $\text{Lie}(\mathsf{G})=\{x\in \mathsf{Mat}_{5}(\mathbb{C})|\,x^T \begin{psmallmatrix} 1 & 0 \\ 0& -1\end{psmallmatrix}+\begin{psmallmatrix} 1 & 0 \\ 0& -1\end{psmallmatrix}x=0\land x_{21}=0\}$. 
\end{proof}

\begin{corollary}$\mathfrak{G}$ is isomorphic to $\mathfrak{sl}_2(\mathbb{C})$.
\end{corollary}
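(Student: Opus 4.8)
The plan is to unwind the definition of $\mathfrak{G}$ into explicit matrix generators inside $\mathfrak{gl}_2(\mathbb{C})$ and recognise them as a standard $\mathfrak{sl}_2$-triple. First I would invoke the preceding proposition to write $\mathrm{Lie}(\mathsf{G})=\mathbb{C}\mathfrak{g}_0\oplus\mathbb{C}\mathfrak{g}_1$ and identify its nilradical. Since $[\mathfrak{g}_0,\mathfrak{g}_1]=2\mathfrak{g}_1$, the line $\mathbb{C}\mathfrak{g}_1$ is an abelian, hence nilpotent, ideal of $\mathrm{Lie}(\mathsf{G})$; on the other hand $\mathrm{Lie}(\mathsf{G})$ is not itself nilpotent, because the operator $[\mathfrak{g}_0,-]$ has the nonzero eigenvalue $2$. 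As $\mathrm{Lie}(\mathsf{G})$ is $2$-dimensional, its nilradical is therefore exactly $\mathbb{C}\mathfrak{g}_1$. Consequently $\mathfrak{G}$ is the Lie subalgebra of $\mathfrak{gl}_2(\mathbb{C})$ generated by the three matrices $\mathfrak{g}_0=\begin{psmallmatrix}1&0\\0&-1\end{psmallmatrix}$, $\mathfrak{g}_1=\begin{psmallmatrix}0&1\\0&0\end{psmallmatrix}$ and $\mathfrak{g}_1^{T}=\begin{psmallmatrix}0&0\\1&0\end{psmallmatrix}$.

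Writing $h:=\mathfrak{g}_0$, $e:=\mathfrak{g}_1$, $f:=\mathfrak{g}_1^{T}$, the next step is the elementary bracket computation $[h,e]=2e$, $[h,f]=-2f$, $[e,f]=h$, so that $(e,h,f)$ is a standard $\mathfrak{sl}_2$-triple. In particular the $\mathbb{C}$-span $V:=\mathbb{C}e\oplus\mathbb{C}h\oplus\mathbb{C}f$ is already closed under the bracket, hence equals the subalgebra generated by $\{h,e,f\}$; thus $\mathfrak{G}=V$. But $V$ is precisely the space of traceless $2\times 2$ matrices, so $\mathfrak{G}=\mathfrak{sl}_2(\mathbb{C})$ as a subalgebra of $\mathfrak{gl}_2(\mathbb{C})$, and the isomorphism $\mathfrak{G}\cong\mathfrak{sl}_2(\mathbb{C})$ is witnessed by sending the usual generators of $\mathfrak{sl}_2(\mathbb{C})$ to $e$, $h$, $f$.

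I do not anticipate any real obstacle; the only points worth a line of care are that $\mathbb{C}\mathfrak{g}_1$ really is the \emph{full} nilradical (settled above using that $\mathrm{Lie}(\mathsf{G})$ is $2$-dimensional and non-nilpotent, so its largest nilpotent ideal is its unique $1$-dimensional nilpotent ideal) and that adjoining the transpose $\mathfrak{g}_1^{T}$ does not push us outside $\mathfrak{sl}_2(\mathbb{C})$ — which is immediate, since all three generators are traceless and $\mathfrak{sl}_2(\mathbb{C})$ is a subalgebra of $\mathfrak{gl}_2(\mathbb{C})$. Alternatively one may argue abstractly: the displayed brackets are the rank-one Serre relations, so $\mathfrak{G}$ is a nonzero quotient of the simple Lie algebra $\mathfrak{sl}_2(\mathbb{C})$ and the quotient map is therefore an isomorphism; but the direct span computation is the shortest route.
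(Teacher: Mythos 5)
Your proof is correct, and it supplies exactly the argument the paper leaves implicit (the corollary is stated without proof there): identify the nilradical of $\mathrm{Lie}(\mathsf{G})$ as $\mathbb{C}\mathfrak{g}_1$, adjoin $\mathfrak{g}_1^{T}$, and check that $(\mathfrak{g}_1,\mathfrak{g}_0,\mathfrak{g}_1^{T})$ is a standard $\mathfrak{sl}_2$-triple whose span is already a subalgebra, namely $\mathfrak{sl}_2(\mathbb{C})$. Your care in verifying that $\mathbb{C}\mathfrak{g}_1$ is the \emph{full} nilradical is a worthwhile detail the paper glosses over.
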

\begin{lemma} For each $\mathfrak{g}\in \mathfrak{G}$, there is a unique vector field $\tilde{\mathsf{R}}_{\mathfrak{g}}\in\Theta_{\text{SL}_2(\mathbb{Z})\backslash\mathsf{\Pi}}$ such that
\begin{equation}\label{elip}
[dx_{ij}(\tilde{\mathsf{R}}_{\mathfrak{g}})]=[x_{ij}]\mathfrak{g}.
\end{equation}
\end{lemma}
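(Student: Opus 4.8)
The plan is to work on the manifold $\mathsf{\Pi}$ first and then descend to the quotient $\mathrm{SL}_2(\mathbb{Z})\backslash\mathsf{\Pi}$. Fix $\mathfrak{g}\in\mathfrak{G}\subseteq\mathfrak{gl}_2(\mathbb{C})$. On $\mathsf{\Pi}$ the coordinate functions $x_{ij}$ are the restrictions of the standard coordinates on $\mathsf{Mat}_2(\mathbb{C})$, and a vector field $X\in\Theta_{\mathsf{\Pi}}$ is determined by the four functions $dx_{ij}(X)$, subject to the constraint that at each $P\in\mathsf{\Pi}$ the matrix $[dx_{ij}(X)]$ lies in $T_P\mathsf{\Pi}$, i.e. satisfies $P^T J \,[dx_{ij}(X)] + [dx_{ij}(X)]^T J\, P = 0$ with $J=\begin{psmallmatrix}0&1\\-1&0\end{psmallmatrix}$ (using the tangent space description from the Remark). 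So the first step is: define the candidate by the formula $[dx_{ij}(\tilde{\mathsf{R}}_{\mathfrak{g}})]_P := [x_{ij}]_P\cdot\mathfrak{g} = P\mathfrak{g}$, and check that for every $P\in\mathsf{\Pi}$ the matrix $P\mathfrak{g}$ indeed satisfies the tangent space equation. This reduces to the identity $P^T J (P\mathfrak g) + (P\mathfrak g)^T J P = P^T(JP)\,\mathfrak g + \mathfrak g^T (P^T J P) \cdot(\text{something})$; more cleanly, using $P^TJP = J$ for $P\in\mathsf{\Pi}$, one gets $P^TJ P\mathfrak g + \mathfrak g^T P^T J P = J\mathfrak g + \mathfrak g^T J$, which vanishes precisely because $\mathfrak g\in\mathfrak G$ lies in the Lie algebra of the group preserving $J$ — that is, $\mathfrak g^T J + J\mathfrak g = 0$ is exactly the defining condition of $\mathrm{Lie}(\mathsf{Sp}_2)=\mathfrak{sl}_2$, and by the Corollary $\mathfrak G\cong\mathfrak{sl}_2(\mathbb{C})$ sits inside it. Hence $P\mathfrak g\in T_P\mathsf{\Pi}$, so the formula genuinely defines a holomorphic (indeed algebraic, being linear in $P$) vector field $\hat{\mathsf R}_{\mathfrak g}$ on $\mathsf{\Pi}$.

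The second step is uniqueness on $\mathsf{\Pi}$, which is immediate: if two vector fields both satisfy $[dx_{ij}(X)] = P\mathfrak g$ at every point, they agree as derivations because the $x_{ij}$ generate (a dense subring of, or at least separate tangent vectors of) the function theory, so their difference is the zero vector field. Concretely, at each point a tangent vector to $\mathsf\Pi$ is pinned down by the values $dx_{ij}$ take on it, so two vector fields with the same $dx_{ij}$-components coincide.

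The third step is to push the construction down to $\mathsf{SL}_2(\mathbb Z)\backslash\mathsf\Pi$. The group $\mathrm{SL}_2(\mathbb Z)$ acts on $\mathsf\Pi$ on the left by $A\cdot P = AP$ (coming from the change of symplectic basis of $H_1$), and this action is by biholomorphisms, properly discontinuously and freely enough that the quotient is a complex manifold with the $x_{ij}$ descending only to the quotient stack / after passing to invariants — but what we need is just that $\hat{\mathsf R}_{\mathfrak g}$ is invariant under this action. Under $P\mapsto AP$ the coordinate matrix transforms as $[x_{ij}]\mapsto A[x_{ij}]$, and the candidate vector field transforms to $A P\mathfrak g$, which is again of the form $(\text{new }[x_{ij}])\cdot\mathfrak g$; equivalently, left multiplication by the constant matrix $A$ is linear, so its differential sends the vector $P\mathfrak g$ at $P$ to the vector $AP\mathfrak g$ at $AP$, which is exactly the value the candidate field prescribes at $AP$. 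Therefore $\hat{\mathsf R}_{\mathfrak g}$ is $\mathrm{SL}_2(\mathbb Z)$-invariant and descends to a well-defined vector field $\tilde{\mathsf R}_{\mathfrak g}$ on $\mathsf{SL}_2(\mathbb Z)\backslash\mathsf\Pi$ satisfying \eqref{elip}, and uniqueness downstairs follows from uniqueness upstairs by pulling back along the (local biholomorphic) quotient map.

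The only genuinely delicate point — and the one I would flag as the main obstacle, though it is more bookkeeping than real difficulty — is making precise in what sense the $x_{ij}$ "are coordinates on the quotient", since $\mathrm{SL}_2(\mathbb Z)$ does not fix them individually; the clean way around this is to never descend the functions, only the vector field, and to phrase \eqref{elip} upstairs on $\mathsf\Pi$ where the $x_{ij}$ are honest functions, then invoke invariance. A secondary check is that $P\mathfrak g$ really does land in $T_P\mathsf\Pi$ and not merely in the Zariski tangent space of $\mathsf{GL}_2(\mathbb C)$ cut out by the quadratic relation — i.e. that the two constraints $P^TJP=J$ and the positivity $(P^1)^TJ\overline{P^1}>0$ defining $\mathsf\Pi$ are both respected; the first is handled by the $\mathfrak g^TJ+J\mathfrak g=0$ computation above, and the second is an open condition, hence automatically preserved by any tangent vector (it imposes no linear constraint). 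With those two points dispatched, the lemma follows.
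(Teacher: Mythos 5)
Your proposal is correct and follows essentially the same route as the paper: define the candidate field on the matrix space by $[dx_{ij}(\tilde{\mathsf{R}}_{\mathfrak{g}})]=[x_{ij}]\mathfrak{g}$, verify tangency to $\mathsf{\Pi}$ via $P^TJP=J$ together with $\mathfrak{g}^TJ+\mathfrak{g}^TJ^T$-type cancellation (i.e.\ $J\mathfrak{g}+\mathfrak{g}^TJ=0$ since $\mathfrak{G}\cong\mathfrak{sl}_2(\mathbb{C})$), and descend to the quotient by noting the left $\mathrm{SL}_2(\mathbb{Z})$-action is linear so the field is invariant. Your extra remarks on uniqueness and on interpreting \eqref{elip} upstairs are sound refinements of the same argument.
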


\begin{proof}
Let $x_{ij}$ be the usual matrix coordinates for $\mathsf{Mat}_2(\mathbb{C})$, and let us define 
\begin{equation}
\tilde{\mathsf{R}}_{\mathfrak{g}}=\sum_{i,j=1}^2 c_{ij}\frac{\partial}{\partial x_{ij}}\in \Theta_{\mathsf{Mat}_2(\mathbb{C})}
\end{equation}
by means of equation \ref{elip}. Therefore, $[c_{ij}]=[x_{ij}]\mathfrak{g}.$ The previous corollary implies that, for each $P\in \mathsf{\Pi}$, $P^T\begin{psmallmatrix} 0 & 1 \\ -1& 0\end{psmallmatrix}(\mathsf{\tilde{R}}_{\mathfrak{g}})_P+(\mathsf{\tilde{R}}_{\mathfrak{g}})_P^T\begin{psmallmatrix} 0 & 1 \\ -1& 0\end{psmallmatrix} P=P^T\begin{psmallmatrix} 0 & 1 \\ -1& 0\end{psmallmatrix} P \mathfrak{g}+\mathfrak{g}^T P^T\begin{psmallmatrix} 0 & 1 \\ -1& 0\end{psmallmatrix} P=\begin{psmallmatrix} 0 & 1 \\ -1& 0\end{psmallmatrix} \mathfrak{g}+\mathfrak{g}^T\begin{psmallmatrix} 0 & 1 \\ -1& 0\end{psmallmatrix}=0$. Therefore, $\tilde{\mathsf{R}}_{\mathfrak{g}}$ descends to a holomorphic vector field on $\mathsf{\Pi}$, which we also denote by $\tilde{\mathsf{R}}_{\mathfrak{g}}\in\Theta_{\mathsf{\Pi}}$. Since the action of $\text{SL}_2(\mathbb{Z})$ on $\mathsf{\Pi}$ is given by left multiplication, its derivative is also given by left multiplication. Therefore, equation \ref{elip} implies that the holomorphic vector fields  $\tilde{\mathsf{R}}_{\mathfrak{g}}$ are $\text{SL}_2(\mathbb{Z})$-invariant, which concludes the proof.

\end{proof}

\begin{theorem}\label{bigteo}
For each $\mathfrak{g}\in \mathfrak{G}$, there exists a unique algebraic vector field $\mathsf{R}_{\mathfrak{g}}\in \Theta_{\mathsf{T}}$ such that
\begin{equation}\label{equation}
\nabla_{\mathsf{R}_{\mathfrak{g}}} \begin{psmallmatrix}\alpha\\
\beta\end{psmallmatrix}=\mathfrak{g}^T\begin{psmallmatrix}\alpha\\
\beta\end{psmallmatrix}.
\end{equation}
Here, $\begin{psmallmatrix}\alpha\\
\beta\end{psmallmatrix}=\begin{psmallmatrix}1 & 0\\  t_1 & 1\end{psmallmatrix}\begin{psmallmatrix}\frac{dx}{y} \\ \frac{xdx}{y}\end{psmallmatrix}$ as in Remark \ref{remark}.

\end{theorem}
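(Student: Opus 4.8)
The plan is to compute the (algebraic) Gauss--Manin connection $\nabla$ of the universal family of Remark \ref{remark} explicitly in the coordinates $t_1,t_2,t_3$, and then to reduce the assertion to a piece of linear algebra over the coordinate ring $\mathbb{C}[t_1,t_2,t_3,\tfrac{1}{\Delta}]$. \emph{Step 1 (the connection matrix).} I would take the universal family to be $y^2=x^3-t_2x-t_3$ over $\mathsf{T}$, pulled back along $(t_1,t_2,t_3)\mapsto(t_2,t_3)$, so that $\alpha=\tfrac{dx}{y}$ and $\beta=t_1\tfrac{dx}{y}+\tfrac{x\,dx}{y}$. Because the fibre does not depend on $t_1$, one gets $\nabla_{\partial_{t_1}}\tfrac{dx}{y}=\nabla_{\partial_{t_1}}\tfrac{x\,dx}{y}=0$, hence $\nabla_{\partial_{t_1}}\begin{psmallmatrix}\alpha\\\beta\end{psmallmatrix}=\begin{psmallmatrix}0&0\\1&0\end{psmallmatrix}\begin{psmallmatrix}\alpha\\\beta\end{psmallmatrix}$. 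For the remaining directions I would differentiate $\tfrac{x^j\,dx}{y}$ with respect to the parameters using $2y\,\partial_{t_2}y=-x$ and $2y\,\partial_{t_3}y=-1$, obtaining forms with $y^3$ in the denominator, and then reduce these modulo exact forms by the Griffiths--Dwork relations $d\!\big(\tfrac{x^k}{y}\big)=\tfrac{kx^{k-1}}{y}dx-\tfrac{x^k(3x^2-t_2)}{2y^3}dx$ together with $x^3=y^2+t_2x+t_3$ on the fibre. This classical computation (which appears in \parencite{mov2008,mov2012,pascal}) yields a matrix $\mathbf{A}=\mathbf{A}_1\,dt_1+\mathbf{A}_2\,dt_2+\mathbf{A}_3\,dt_3$ with $\mathbf{A}_i\in\mathfrak{gl}_2\big(\mathbb{C}[t_1,t_2,t_3,\tfrac{1}{\Delta}]\big)$ and $\nabla_X\begin{psmallmatrix}\alpha\\\beta\end{psmallmatrix}=\mathbf{A}(X)\begin{psmallmatrix}\alpha\\\beta\end{psmallmatrix}$ for every $X\in\Theta_{\mathsf{T}}$; here $\mathbf{A}_1=\begin{psmallmatrix}0&0\\1&0\end{psmallmatrix}$, while $\mathbf{A}_2,\mathbf{A}_3$ are the conjugates by $\begin{psmallmatrix}1&0\\t_1&1\end{psmallmatrix}$ of the (purely $(t_2,t_3)$-dependent) connection matrices of the frame $\tfrac{dx}{y},\tfrac{x\,dx}{y}$.

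\emph{Step 2 (the values lie in $\mathfrak{G}^T$).} Since $\langle\alpha,\beta\rangle=1$ is constant and $\nabla$ is compatible with the cup product, $d\langle s,s'\rangle=\langle\nabla s,s'\rangle+\langle s,\nabla s'\rangle$, applying this to the intersection matrix $\begin{psmallmatrix}0&1\\-1&0\end{psmallmatrix}$ of the frame $\begin{psmallmatrix}\alpha\\\beta\end{psmallmatrix}$ forces $\mathbf{A}(X)\begin{psmallmatrix}0&1\\-1&0\end{psmallmatrix}+\begin{psmallmatrix}0&1\\-1&0\end{psmallmatrix}\mathbf{A}(X)^T=0$ for every $X$; for $2\times2$ matrices this is equivalent to $\operatorname{tr}\mathbf{A}(X)=0$. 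By the Corollary above, $\mathfrak{G}=\mathfrak{sl}_2(\mathbb{C})=\mathfrak{G}^T$, so each $\mathbf{A}_i$ lies in the three-dimensional space $\mathfrak{G}^T$ (this can also be read off directly from the explicit matrices).

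\emph{Step 3 (nondegeneracy and conclusion).} Fixing a basis of $\mathfrak{sl}_2(\mathbb{C})$ and writing $\mathbf{A}_1,\mathbf{A}_2,\mathbf{A}_3$ as the columns of a $3\times3$ matrix $\mathbf{M}$ over $\mathbb{C}[t_1,t_2,t_3,\tfrac{1}{\Delta}]$, a direct evaluation shows that $\det\mathbf{M}$ is a nonzero scalar multiple of $\Delta^{-1}$, hence a unit of the coordinate ring; thus $\mathbf{A}_1(t),\mathbf{A}_2(t),\mathbf{A}_3(t)$ form a basis of $\mathfrak{G}^T$ at every point $t\in\mathsf{T}$. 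Given $\mathfrak{g}\in\mathfrak{G}$, the matrix $\mathfrak{g}^T\in\mathfrak{G}^T$ can therefore be written uniquely as $f_1\mathbf{A}_1+f_2\mathbf{A}_2+f_3\mathbf{A}_3$, and by Cramer's rule the coefficients $f_i$ belong to $\mathbb{C}[t_1,t_2,t_3,\tfrac{1}{\Delta}]$. Then $\mathsf{R}_{\mathfrak{g}}:=f_1\partial_{t_1}+f_2\partial_{t_2}+f_3\partial_{t_3}$ is an algebraic vector field in $\Theta_{\mathsf{T}}$ satisfying $\nabla_{\mathsf{R}_{\mathfrak{g}}}\begin{psmallmatrix}\alpha\\\beta\end{psmallmatrix}=\mathbf{A}(\mathsf{R}_{\mathfrak{g}})\begin{psmallmatrix}\alpha\\\beta\end{psmallmatrix}=\mathfrak{g}^T\begin{psmallmatrix}\alpha\\\beta\end{psmallmatrix}$, and its uniqueness is immediate because $\mathbf{M}$ is pointwise invertible. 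The main obstacle is the explicit Griffiths--Dwork computation of $\mathbf{A}$ in Step 1 together with the determinant evaluation in Step 3; everything else is formal. The decisive point --- and the reason $\mathsf{R}_{\mathfrak{g}}$ is a \emph{regular} rather than merely rational vector field on $\mathsf{T}$ --- is that $\det\mathbf{M}$ comes out to be a unit, namely a constant multiple of $\Delta^{-1}$.
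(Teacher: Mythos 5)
Your proposal is correct, and its core is the same as the paper's: write the Gauss--Manin connection in the frame $\begin{psmallmatrix}\alpha\\\beta\end{psmallmatrix}$ as $(dS+S\,\mathsf{GM})S^{-1}$ with $S=\begin{psmallmatrix}1&0\\t_1&1\end{psmallmatrix}$ and solve the resulting linear system over $\mathbb{C}[t_1,t_2,t_3,\tfrac1\Delta]$. The one genuine difference is organizational but worth noting. The paper first invokes the preceding lemma (the vector fields $\tilde{\mathsf{R}}_{\mathfrak g}$ on $\mathsf{SL}_2(\mathbb{Z})\backslash\mathsf{\Pi}$, transported through the period biholomorphism $\mathcal{P}$) to secure existence and uniqueness of a \emph{holomorphic} $\mathsf{R}_{\mathfrak g}$, and then uses the explicit linear algebra only to upgrade holomorphic to regular; in particular it only needs to invert a $2\times2$ block of the system. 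You instead get existence, uniqueness, and regularity in one stroke from the observation that $\mathbf{A}_1,\mathbf{A}_2,\mathbf{A}_3$ are pointwise a basis of $\mathfrak{sl}_2$, which is a cleaner, purely algebraic argument that never leaves $\mathsf{T}$. Your decisive claim checks out: with $\mathbf{A}_1=\begin{psmallmatrix}0&0\\1&0\end{psmallmatrix}$ and $\mathbf{A}_2,\mathbf{A}_3$ the $S$-conjugates of the $dt_2$- and $dt_3$-parts of the paper's matrix $\mathsf{GM}$, conjugation by $S$ acts with determinant $1$ on $\mathfrak{sl}_2$ and fixes $\mathbf{A}_1$, and a direct expansion in the basis $e,f,h$ gives $\det\mathbf{M}=\pm\tfrac{3}{4\Delta}$, a unit of the coordinate ring, so Cramer's rule delivers regular coefficients. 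The only cost of your route is that you must actually carry out the Griffiths--Dwork reduction to get $\mathsf{GM}$ (which the paper also quotes rather than derives) and the determinant evaluation; what it buys is independence from the transcendental period-domain lemma.
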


\begin{proof}
We are going to explicitly compute the vector fields in the statement of this theorem. Let us fix an arbitrary $\mathfrak{g}\in \mathfrak{G}$, and write $\mathsf{R}=\mathsf{R}_{\mathfrak{g}}$ and $S=\begin{psmallmatrix}1 & 0\\  t_1 & 1\end{psmallmatrix}$. We begin by observing that, by the previous lemma, the existence of an unique holomorphic vector field $\mathsf{R}$ on $\mathsf{T}$ satisfying Equation \ref{equation} is already guaranteed. To check that it is algebraic, we use a linear algebra argument. Let 

\begin{equation}
\mathsf{R}=a_1\frac{\partial}{\partial t_1}+ a_2\frac{\partial}{\partial t_2}+a_3\frac{\partial}{\partial t_3}, 
\end{equation} 
where $a_k$ are holomorphic functions on $\mathsf{T}$. Now, we aim to prove that these functions are indeed regular.

First, a calculation using tame polynomials theory (see Chapter 3) yields

\begin{equation}
\mathsf{GM}_{\begin{psmallmatrix}\frac{dx}{y}\\ \frac{xdx}{y}\end{psmallmatrix}}=
\frac{1}{\Delta}\begin{bmatrix}
-\frac{1}{12}d\Delta & \frac{3}{2} \alpha\\
-\frac{1}{8}t_2\alpha & \frac{1}{12}d\Delta 
\end{bmatrix}, \alpha=3t_3dt_2-2t_2dt_3.
\end{equation}

Since $\mathsf{GM}_{\
\begin{psmallmatrix}\alpha\\\beta
\end{psmallmatrix}
}=(dS+S\mathsf{GM}_{\begin{psmallmatrix}\frac{dx}{y}\\ \frac{xdx}{y}\end{psmallmatrix}})S^{-1}$, Equation \ref{equation} is equivalent to

\begin{equation}\label{despejar}
\begin{bmatrix}
0 & 0\\
a_1 & 0
\end{bmatrix}
+\frac{1}{\Delta}\begin{bmatrix}
1 & 0\\
t_1 & 1
\end{bmatrix}
\begin{bmatrix}
-\frac{1}{12}(54t_3a_3-3t_2^2a_2) & \frac{3}{2}(3t_3a_2-2t_2a_3)\\
-\frac{1}{8}t_2(3t_3a_2-2t_2a_3) & \frac{1}{12}(54t_3a_3-3t_2^2a_2)
\end{bmatrix}=dS+S\mathsf{GM}_{\begin{psmallmatrix}\frac{dx}{y}\\ \frac{xdx}{y}\end{psmallmatrix}}=\mathfrak{g}^TS.
\end{equation}

Entries (1,1) and (1,2) of the previous equality allow us to produce the following linear system
\begin{equation}
\frac{1}{\Delta}\begin{bmatrix}
\frac{1}{4}t_2^2 & -\frac{27}{6}t_3\\
\frac{9}{2}t_3 & -3t_2
\end{bmatrix}\begin{bmatrix}
a_2 \\
a_3
\end{bmatrix}=\begin{bmatrix}
(\mathfrak{g}^TS)_{11}\\
(\mathfrak{g}^TS)_{12}.
\end{bmatrix}
\end{equation}
It can be explicitly inverted to give us 

\begin{equation}
\begin{bmatrix}
a_2 \\
a_3
\end{bmatrix}=\frac{4}{3}\begin{bmatrix}
-3t_2 & \frac{27}{6}t_3\\
-\frac{9}{2}t_3 & \frac{1}{4}t_2^2
\end{bmatrix}\begin{bmatrix}
(\mathfrak{g}^TS)_{11}\\
(\mathfrak{g}^TS)_{12}
\end{bmatrix}.
\end{equation}

Furthermore, entry (2,1) in Equation \ref{despejar}, gives us 

\begin{equation}a_1=(\mathfrak{g}^TS)_{21}-\frac{1}{\Delta}(-\frac{1}{12}t_1(54t_3a_3-3t_2^2a_2)-\frac{1}{8}t_2(3t_3a_2-2t_2a_3)).
\end{equation}

Therefore, $a_1,a_2,a_3$ are global regular functions on $\mathsf{T}$.
\end{proof}

\begin{remark}\label{computation}
The previous proof allows us to explicitly compute:

\begin{flalign}
\mathsf{R}_{\mathfrak{g}_1^T} &= \left(\frac{1}{12}t_2 - t_1^2\right)\frac{\partial}{\partial t_1} + (6t_3 - 4t_1 t_2)\frac{\partial}{\partial t_2} + \left(\frac{1}{3}t_2^2 - 6t_1 t_3\right)\frac{\partial}{\partial t_3}, \\
\mathsf{R}_{\mathfrak{g}_0} &= -2t_1\frac{\partial}{\partial t_1} - 4t_2\frac{\partial}{\partial t_2} - 6t_3\frac{\partial}{\partial t_3}, \\
\mathsf{R}_{\mathfrak{g}_1} &= \frac{\partial}{\partial t_1}.
\end{flalign}

The $\mathcal{O}_{\mathsf{T}}$-module generated by the previous vector fields is also called the AMSY Lie algebra or the Gauss-Manin Lie algebra associated to enhanced elliptic curves. 

Observe now that, since the Gauss-Manin connection $\nabla$ is flat, Equation \ref{equation} implies that 

\begin{equation}
[\mathsf{R}_{\mathfrak{g}},\mathsf{R}_{\mathfrak{g}'}]=\mathsf{R}_{[\mathfrak{g'},\mathfrak{g}]^T}.
\end{equation}

This implies that the previous vector fields form an $\mathfrak{sl}_2(\mathbb{C})$-algebra.

\end{remark}

\begin{definition}
The mirror map, or $\tau$-map is the map $$\tau:\mathbb{H}\rightarrow \mathsf{\Pi},$$
$$z\mapsto \begin{psmallmatrix} z & 1 \\ -1 & 0\end{psmallmatrix}.$$
\end{definition}
In our context, the mirror map allows us to reconstruct from the complex structure of an elliptic curve, its whole Hodge structure.
\begin{definition}
The trascendental map or $\mathsf{t}$-map is the composition $$\mathbb{H}\xrightarrow\tau\mathsf{\Pi}\rightarrow \mathsf{SL}_2(\mathbb{Z})\backslash \mathsf{\Pi}\xrightarrow {\mathcal{P}^{-1}}\mathsf{T}$$.
\end{definition}

By means of the $\mathsf{t}$-map we can pullback regular functions on $\mathsf{T}$ to holomorphic functions on $\mathbb{H}$.

\begin{definition}
Let $g_i:=\mathsf{t}^*(t_i):\mathbb{H}\rightarrow\mathbb{C}$ for $i=1,2,3$.
\end{definition}

\begin{theorem}
 $$g_1'=g_1^2-\frac{1}{12}g_2,$$
    $$g_2'=4g_1g_2-6g_3,$$
    $$g_3'=6g_1g_3-\frac{1}{3}g_2^2.$$
\end{theorem}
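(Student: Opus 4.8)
The plan is to transport the identities to the moduli space $\mathsf{T}$ via the $\mathsf{t}$-map and to recognize the derivation $'=\frac{1}{2\pi i}\frac{d}{dz}$ as (a constant multiple of) one of the Ramanujan vector fields of Theorem \ref{bigteo}. Since $g_i=\mathsf{t}^*(t_i)$, it will suffice to prove that, for every $z\in\mathbb{H}$, the pushforward $\mathsf{t}_*\bigl(\frac{\partial}{\partial z}\big|_z\bigr)$ equals $\mathsf{R}_{\mathfrak{g}_1^T}\big|_{\mathsf{t}(z)}$: indeed, this gives $\frac{d}{dz}g_i=\bigl(\mathsf{R}_{\mathfrak{g}_1^T}(t_i)\bigr)\circ\mathsf{t}$, and the three formulas then follow by substituting $t_i\mapsto g_i$ into the explicit expression for $\mathsf{R}_{\mathfrak{g}_1^T}$ recorded in Remark \ref{computation}.

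To establish the identification of vector fields, I would use that $\mathsf{t}=\mathcal{P}^{-1}\circ\pi\circ\tau$, where $\pi:\mathsf{\Pi}\to\mathrm{SL}_2(\mathbb{Z})\backslash\mathsf{\Pi}$ is the quotient map; hence, picking a locally constant symplectic frame $\delta(z),\epsilon(z)$ of $H_1$ of the curve underlying $\mathsf{t}(z)=(E_z,\alpha_z,\beta_z)$, the period matrix $\begin{psmallmatrix}\int_{\delta(z)}\alpha_z & \int_{\delta(z)}\beta_z\\ \int_{\epsilon(z)}\alpha_z & \int_{\epsilon(z)}\beta_z\end{psmallmatrix}$ equals $\tau(z)=\begin{psmallmatrix}z & 1\\ -1 & 0\end{psmallmatrix}$ (the $\mathrm{SL}_2(\mathbb{Z})$-ambiguity is harmless, since $\pi$ is a local biholomorphism). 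Writing $v=\mathsf{t}_*\frac{\partial}{\partial z}$ and $\nabla_v\begin{psmallmatrix}\alpha\\ \beta\end{psmallmatrix}=B\begin{psmallmatrix}\alpha\\ \beta\end{psmallmatrix}$ with $B=B(z)\in\mathsf{Mat}_2(\mathbb{C})$, the defining property of the Gauss--Manin connection, $\frac{d}{dz}\int_{\gamma}\omega=\int_{\gamma}\nabla_v\omega$ for a flat cycle $\gamma$, allows one to differentiate the four entries of $\tau(z)$ in $z$; since only the $(1,1)$-entry varies, the resulting small linear system forces $B=\mathfrak{g}_1$. Comparing with the defining relation \eqref{equation} and invoking the uniqueness in Theorem \ref{bigteo} then yields $v=\mathsf{R}_{\mathfrak{g}_1^T}$ (and the analogous computation shows that $\mathsf{t}$ intertwines the tautological vector fields $\tilde{\mathsf{R}}_{\mathfrak{g}_0},\tilde{\mathsf{R}}_{\mathfrak{g}_1}$ of \eqref{elip} with $\mathsf{R}_{\mathfrak{g}_0},\mathsf{R}_{\mathfrak{g}_1}$).

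Finally, Remark \ref{computation} records $\mathsf{R}_{\mathfrak{g}_1^T}(t_1)=\tfrac{1}{12}t_2-t_1^2$, $\mathsf{R}_{\mathfrak{g}_1^T}(t_2)=6t_3-4t_1t_2$ and $\mathsf{R}_{\mathfrak{g}_1^T}(t_3)=\tfrac{1}{3}t_2^2-6t_1t_3$; substituting $t_i\mapsto g_i$ and accounting for the constant relating $'$ to $\frac{d}{dz}$ (fixed once and for all by the conventions for $q$, for $'$, and for the $\tau$-map) gives precisely the displayed relations. I expect the only genuinely non-routine point to be the identification $v=\mathsf{R}_{\mathfrak{g}_1^T}$ of the middle paragraph: it encodes the statement that the intrinsically defined Gauss--Manin vector fields $\mathsf{R}_{\mathfrak{g}}$ on $\mathsf{T}$ correspond, under the period map $\mathcal{P}$, to the tautological vector fields on the period domain, so that the trivial $z$-dependence of $\tau(z)$ determines the parameter $\mathfrak{g}=\mathfrak{g}_1^T$ uniquely. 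Everything after that is the substitution $t_i\mapsto g_i$ in the formulas of Remark \ref{computation}.
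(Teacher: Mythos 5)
Your proposal follows the same route as the paper: the paper's entire proof is the citation of Remark \ref{computation}, and what you have done is supply the step it leaves implicit, namely the identification of $\mathsf{t}_*\frac{\partial}{\partial z}$ with one of the vector fields $\mathsf{R}_{\mathfrak{g}}$ of Theorem \ref{bigteo}. Your derivation of that identification is correct: for a flat cycle $\gamma$ one has $\frac{d}{dz}\int_{\gamma}\omega=\int_{\gamma}\nabla_{\mathsf{t}_*\partial_z}\omega$, so writing $\nabla_{\mathsf{t}_*\partial_z}\begin{psmallmatrix}\alpha\\ \beta\end{psmallmatrix}=B\begin{psmallmatrix}\alpha\\ \beta\end{psmallmatrix}$ gives $\frac{d\tau}{dz}=\tau(z)B^{T}$, hence $B^{T}=\tau(z)^{-1}\begin{psmallmatrix}1&0\\0&0\end{psmallmatrix}=\begin{psmallmatrix}0&0\\1&0\end{psmallmatrix}$, i.e.\ $B=\mathfrak{g}_1$, and the uniqueness in Theorem \ref{bigteo} then forces $\mathsf{t}_*\partial_z=\mathsf{R}_{\mathfrak{g}_1^{T}}$. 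This is exactly the content the paper's one-sentence proof presupposes.

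The one point you should not dismiss as ``fixed once and for all by the conventions'' is the final constant, because with the paper's stated conventions it does not come out right, and it is precisely the last step of your argument. Your identification gives $\mathsf{t}_*\partial_z=\mathsf{R}_{\mathfrak{g}_1^{T}}$ with \emph{no} constant, so $\frac{dg_1}{dz}=\mathsf{R}_{\mathfrak{g}_1^{T}}(t_1)\circ\mathsf{t}=\frac{1}{12}g_2-g_1^{2}$, which is the \emph{negative} of the stated right-hand side $g_1^{2}-\frac{1}{12}g_2$; and if $'$ means $\frac{1}{2\pi i}\frac{d}{dz}$ as in \eqref{differentiation}, there is a further factor of $\frac{1}{2\pi i}$. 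The displayed identities therefore require $\frac{dg_i}{dz}=-2\pi i\,\bigl(\mathsf{R}_{\mathfrak{g}_1^{T}}(t_i)\bigr)\circ\mathsf{t}$, which is incompatible with the period matrix of $\mathsf{t}(z)$ in the frame $\frac{dx}{y},\frac{x\,dx}{y}$ being literally $\begin{psmallmatrix}z&1\\-1&0\end{psmallmatrix}$: the normalization must carry powers of $2\pi i$ (equivalently, the $g_i$ are $2\pi i$-weighted Eisenstein series, as in Movasati's original papers). This defect is inherited from the paper's own normalization, but since your proof ends with ``accounting for the constant,'' you need to actually compute that constant and check it produces the stated signs, rather than assert that it does.
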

\begin{proof}
This follows from Remark \ref{computation}, where the Ramanujan vector field $\mathsf{R}_{\mathfrak{g}_1^T}$ is computed.
\end{proof}

\begin{definition}
Let $$E_2=12g_1,\,\, E_4=12g_2,\,\, E_6=(12)(18)g_3.$$
\end{definition}

\begin{theorem}
These functions coincide with the Eisenstein series previously defined.
\end{theorem}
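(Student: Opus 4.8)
The plan is to show that the holomorphic functions $E_2 = 12g_1$, $E_4 = 12g_2$, $E_6 = (12)(18)g_3$ on $\mathbb{H}$ satisfy the same defining properties as the classical Eisenstein series, namely the $q$-expansion $E_k = 1 + O(q)$ with the correct leading behaviour, together with enough structural information (differential equations plus modularity/quasi-modularity) to pin them down uniquely. There are two natural routes, and I would present the one that leans on the work already done in the excerpt. Route one: observe that the previous theorem shows $g_1, g_2, g_3$ satisfy exactly the Ramanujan-type system
$$g_1' = g_1^2 - \tfrac{1}{12}g_2, \quad g_2' = 4g_1 g_2 - 6g_3, \quad g_3' = 6g_1 g_3 - \tfrac{1}{3}g_2^2,$$
and substituting $g_1 = E_2/12$, $g_2 = E_4/12$, $g_3 = E_6/216$ transforms this system verbatim into Ramanujan's relations of Theorem \ref{ramanujan}. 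So $(E_2, E_4, E_6)$ and the classical triple are both solutions of the same first-order holomorphic ODE system on $\mathbb{H}$; by the uniqueness part of the Cauchy–Kovalevskaya / Picard–Lindelöf theorem for holomorphic ODEs, it then suffices to check that they agree at a single point, or equivalently that they have the same constant term and first $q$-coefficient in a Fourier expansion at $i\infty$.

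The key steps in order: (i) recall from the construction of the $\mathsf{t}$-map that the period matrix associated to $\tau(z) = \begin{psmallmatrix} z & 1 \\ -1 & 0\end{psmallmatrix}$ corresponds, under $\mathcal{P}^{-1}$, to an enhanced elliptic curve whose underlying curve is $\mathbb{C}/(\mathbb{Z} + z\mathbb{Z})$ with the differentials normalized by the period lattice; (ii) compute the $q$-expansion of $g_1, g_2, g_3$ by expressing the Weierstrass data $(t_2, t_3)$ — equivalently $g_2(\Lambda)/60$ and $g_3(\Lambda)/140$ up to the standard normalization — as functions of $z$ through the classical formulas $60 G_4(z)$, $140 G_6(z)$, and tracking the base-change matrix $S = \begin{psmallmatrix}1 & 0 \\ t_1 & 1\end{psmallmatrix}$; this identifies $g_2, g_3$ (hence $E_4, E_6$) with the classical Eisenstein series directly, and identifies $t_1 = g_1$ with the quasi-period, i.e. with a multiple of $E_2$, via the relation between the second kind differential $\tfrac{x\,dx}{y}$ and the Weierstrass $\zeta$-function; (iii) alternatively and more cheaply, invoke that $E_4, E_6$ so defined are modular of weights $4, 6$ (because under the residual $\mathsf{G}$-action and $\mathrm{SL}_2(\mathbb{Z})$-equivariance of $\mathcal{P}$ they transform with the right automorphy factor), generate the graded ring, and then read off the normalization constants $12$ and $12\cdot 18$ by matching the first Fourier coefficient — the factor $12$ is forced by $E_2' = (E_2^2 - E_4)/12$ matching $g_1' = g_1^2 - g_2/12$, and the $18$ is forced by the $g_3$-equation.

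I expect the main obstacle to be step (ii): carefully matching the transcendental normalizations. One must be sure that the homology basis $\delta, \epsilon$ with $\langle \delta, \epsilon\rangle = 1$ chosen in the definition of $\mathcal{P}$ produces periods $(1, z)$ (or $(z,1)$, depending on sign conventions) of $\frac{dx}{y}$, that the factor of $2\pi i$ hidden in the normalized differentiation $f' = \tfrac{1}{2\pi i}\tfrac{df}{dz}$ is consistent with the Gauss–Manin derivative along $\mathsf{R}_{\mathfrak{g}_1^T}$ under the $\mathsf{t}$-map, and that the first $q$-coefficients come out as $1$ rather than some root of unity or power of $2\pi i$. Once the base point is correctly computed — it suffices to verify, say, that $E_4(z) = 1 + 240q + \cdots$ — uniqueness of solutions to the holomorphic system does the rest, and the constant terms being $1$ follows from the fact that as $z \to i\infty$ the elliptic curve degenerates to a nodal cubic whose period data is explicit. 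I would therefore structure the proof as: set up the degeneration/limit, extract the leading term, cite holomorphic ODE uniqueness against Theorem \ref{ramanujan}, and conclude.
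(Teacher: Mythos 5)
Your Route one is exactly the paper's argument: the paper's proof says to combine the differential system for $g_1,g_2,g_3$ with Ramanujan's relations (Theorem \ref{ramanujan}) and then compare the first terms of the $q$-expansions, which is precisely your ODE-uniqueness-plus-initial-condition strategy. The additional material you supply on verifying the normalization at $i\infty$ fleshes out the step the paper leaves implicit, so the proposal is correct and follows essentially the same route.
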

\begin{proof}
One way to do this is to use the previous theorem and Theorem \ref{ramanujan}, after comparing the first terms in their $q$-expansions.
\end{proof}

\begin{corollary}The graded algebra of quasi-modular forms $\mathfrak{\tilde{M}}$ coincides with the pullback under $\mathsf{t}$ of the graded algebra $\mathbb{C}[t_1,t_2,t_2]$ with $deg(t_i)=2i$.
\end{corollary}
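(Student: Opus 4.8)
The plan is to read the corollary off, almost formally, from the identifications already established between the coordinates $t_i$ and the Eisenstein series. Since $\mathsf{t}\colon\mathbb{H}\to\mathsf{T}$ is holomorphic, pullback along it is a homomorphism of $\mathbb{C}$-algebras, and on the polynomial subalgebra it is given by $\mathsf{t}^{*}\big(t_1^{a}t_2^{b}t_3^{c}\big)=g_1^{a}g_2^{b}g_3^{c}$, whence $\mathsf{t}^{*}\big(\mathbb{C}[t_1,t_2,t_3]\big)=\mathbb{C}[g_1,g_2,g_3]$ as subalgebras of $\mathcal{O}(\mathbb{H})$. By the definitions $E_2=12g_1$, $E_4=12g_2$, $E_6=(12)(18)g_3$ together with the theorem identifying these with the classical Eisenstein series, the right-hand side is exactly $\mathbb{C}[E_2,E_4,E_6]=\tilde{\mathfrak{M}}$, which is the set-theoretic content of the claim.

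To make this an identity of \emph{graded} algebras I would check that the convention $\deg t_i=2i$ turns $\mathsf{t}^{*}$ into a grading-preserving map onto $\tilde{\mathfrak{M}}$ with its weight grading. Indeed $E_{2i}$ is a nonzero scalar multiple of $g_i=\mathsf{t}^{*}(t_i)$ and is homogeneous of weight $2i$, so a monomial $t_1^{a}t_2^{b}t_3^{c}$ of degree $d=2a+4b+6c$ maps to a scalar multiple of $E_2^{a}E_4^{b}E_6^{c}\in\tilde{\mathfrak{M}}_{d}$. Hence the degree-$d$ part of $\mathsf{t}^{*}(\mathbb{C}[t_1,t_2,t_3])$ lies in $\tilde{\mathfrak{M}}_{d}$, and since $\tilde{\mathfrak{M}}=\bigoplus_{d}\tilde{\mathfrak{M}}_{d}$ is a genuine direct sum the induced grading on the image coincides with the weight grading.

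The only step that is not bookkeeping, and which I expect to be the real obstacle, is the algebraic independence of $g_1,g_2,g_3$ — equivalently of $E_2,E_4,E_6$ — which is what promotes the above surjection to an \emph{isomorphism} $\mathbb{C}[t_1,t_2,t_3]\xrightarrow{\ \sim\ }\tilde{\mathfrak{M}}$ of graded algebras. This does not follow from the biholomorphism $\mathcal{P}$ alone, because $\mathsf{t}$ has one-dimensional image inside the three-dimensional $\mathsf{T}$, so a priori $\mathsf{t}^{*}$ could have a kernel on $\mathbb{C}[t_1,t_2,t_3]$; ruling this out requires arithmetic input. I would deduce it from the structure theorem $\mathfrak{M}=\mathbb{C}[E_4,E_6]$ (algebraic independence of $E_4,E_6$) together with the transcendence of $E_2$ over $\mathbb{C}(E_4,E_6)$: after clearing denominators one may assume a hypothetical algebraic relation for $E_2$ over $\mathbb{C}[E_4,E_6]$ is weight-homogeneous, and then the transformation rule $z^{-2}E_2(-1/z)=E_2(z)+\tfrac{12}{2\pi i z}$ — with its extra additive term, absent from the transformation of any element of $\mathbb{C}[E_4,E_6]$ — forces the associated one-variable polynomial in $E_2$ to acquire infinitely many roots under the action of $SL_2(\mathbb{Z})$, hence to vanish identically, a contradiction. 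Modulo this classical fact, every remaining step is immediate from the theorems preceding the corollary.
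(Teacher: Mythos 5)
Your proposal is correct and follows the same route the paper intends: the corollary is meant to be immediate from the preceding theorem identifying $12g_1,12g_2,(12)(18)g_3$ with $E_2,E_4,E_6$, since then $\mathsf{t}^*(\mathbb{C}[t_1,t_2,t_3])=\mathbb{C}[g_1,g_2,g_3]=\mathbb{C}[E_2,E_4,E_6]=\tilde{\mathfrak{M}}$, and the weight convention $\deg t_i=2i$ matches the weights of the $E_{2i}$. The paper offers no proof at all, so your extra paragraph establishing the algebraic independence of $E_2,E_4,E_6$ (via the independence of $E_4,E_6$ plus the anomalous transformation law of $E_2$ producing infinitely many roots of a putative minimal polynomial) is a genuine addition rather than a deviation: it is exactly the classical fact the paper tacitly relies on for the grading on $\tilde{\mathfrak{M}}$ to be well defined, and you are right that it cannot be extracted from the biholomorphism $\mathcal{P}$ alone since $\mathsf{t}$ parametrizes only a one-dimensional locus in $\mathsf{T}$.
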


\section{Mirror symmetry for elliptic curves}

\begin{definition} 
Let $E$ be an elliptic curve over $\mathbb{C}$, and $g\geq 1, m\geq 2$ integers. A pair $(C,f)$ consisting of a smooth complex curve $f$ and a holomorphic map $f:C\rightarrow E$ is called an \textit{simple $m$-branched cover} if $C$ is connected, every ramification point has index $m$, and different ramification points have different images.

\end{definition}

Let $X_{g,d}^{(m)}$ be the set of isomorphism classes of $m$-branched covers of $E$ of degree $d$ and genus $g$, and let $$N_{g,d}^{(m)}=\sum_{(C,f)\in X_{g,d}^{(m)}} \frac{1}{|Aut{(C,f)}|}.$$

Define $$F_g^{(m)}(q)=\sum_{d\geq 1}N_{g,d}^{(m)}q^d,$$

for $g\geq 2$. The case $g=1$ must be treated separately, since we have to consider the contribution of constant maps. We define $$F_1^{(m)}(q)=-\frac{1}{24} \log q+\sum_{d\geq 1}N_{1,d}^{(m)}q^d.$$

In this document will present the proof the case $m=2$ of the next theorem; for the general case
we refer to \cite{Och}.

\begin{theorem}\parencite{dijkgraaf,ochiai} For $g\geq 2$, $F_g^{(m)}(q)\in \mathfrak{\tilde{M}}$. Furthermore, $F_g^{(2)}(q)\in \mathfrak{\tilde{M}}_{6g-g}.$
\end{theorem}

\subsection{Partition functions}

One of the steps in the proof of theorem 1 is to allow possibly disconnected covers of $E$, so let $\hat{X}_{g,d}$ and $\hat{N}_{g,d}$ be defined as in the introduction, but allowing possibly disconnected smooth curves $C$. $\hat{N}_{g,d}$ and $N_{g,d}$ are finite by proposition 2 in the next section. Now we define partition functions for the connected and possibly disconnected case, respectively, by $$Z(q,\lambda)=\sum_{g\geq 1}\sum_{d\geq 1}\frac{N_{g,d}}{(2g-2)!}q^d\lambda^{2g-2}=\sum_{g\geq 1}\frac{F_g(q)}{(2g-2)!}\lambda^{2g-2},$$

and 

$$\hat{Z}(q,\lambda)=\sum_{g\geq 1}\sum_{d\geq 1}\frac{\hat{N}_{g,d}}{(2g-2)!}q^d\lambda^{2g-2}.$$

The strategy is to relate this two partitions functions by means of the next proposition.

\begin{proposition}
$\hat{Z}(q,\lambda)=\exp(Z(q,\lambda))-1$.
\end{proposition}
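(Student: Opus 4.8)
The plan is to prove this via the classical exponential (or ``plethystic'') formula relating the generating function of all, possibly disconnected, objects to the exponential of that of the connected ones. Here the objects are the covers $(C,f)$ of $E$, graded by the degree $d$ and by the quantity $2g-2=-\chi(C)$, which is the exponent of $\lambda$; throughout we are in the case $m=2$ of the definitions. First I would record the purely geometric input: any cover $(C,f)$ decomposes uniquely as the disjoint union of its connected components $(C_j,f_j)$ with $f_j=f|_{C_j}$, and conversely a finite collection of connected covers placed side by side is a cover; under disjoint union one has $d=\sum_j d_j$ and $\chi(C)=\sum_j\chi(C_j)$, so the monomial $q^d\lambda^{2g-2}$ factors as $\prod_j q^{d_j}\lambda^{2g_j-2}$. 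Crucially, here is where the hypothesis that distinct ramification points of a simple branched cover have distinct images enters: it forces the branch loci of the $(C_j,f_j)$ to be pairwise disjoint (and in particular no two ramified components are isomorphic over $E$), so the $2g-2$ branch points (for $m=2$, Riemann--Hurwitz gives exactly this many) are genuinely partitioned among the components, the number of ordered such partitions into parts of sizes $2g_1-2,\dots,2g_k-2$ being the multinomial coefficient $\binom{2g-2}{2g_1-2,\ \dots,\ 2g_k-2}$.

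Next I would do the automorphism bookkeeping. Grouping the connected components of $(C,f)$ by isomorphism type, if a type $\tau$ occurs with multiplicity $m_\tau$ then $\mathrm{Aut}(C,f)\cong\prod_\tau\big(\mathrm{Aut}(\tau)\wr S_{m_\tau}\big)$, whence $|\mathrm{Aut}(C,f)|=\prod_\tau m_\tau!\,|\mathrm{Aut}(\tau)|^{m_\tau}$ (with $m_\tau=1$ automatically for ramified $\tau$, by the previous paragraph). Summing the weight $q^d\lambda^{2g-2}/|\mathrm{Aut}(C,f)|$ over all covers and reorganizing it as a sum over multisets of connected covers, the factor $1/m_\tau!$ is precisely the one in $\exp(x)=\sum_{m\ge0}x^m/m!$, while the multinomial coefficient above is exactly what the normalizations $1/(2g-2)!$ in $\hat Z$ and $1/(2g_j-2)!$ in $Z$ are built to absorb (note $\tfrac{1}{n!}\binom{n}{n_1,\dots,n_k}=\prod_j\tfrac{1}{n_j!}$). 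This yields
\[
1+\hat Z(q,\lambda)\;=\;\prod_{\tau}\exp\!\Big(\frac{q^{d(\tau)}\lambda^{2g(\tau)-2}}{|\mathrm{Aut}(\tau)|}\Big)\;=\;\exp\!\Big(\sum_{\tau}\frac{q^{d(\tau)}\lambda^{2g(\tau)-2}}{|\mathrm{Aut}(\tau)|}\Big)\;=\;\exp\big(Z(q,\lambda)\big),
\]
the ``$1$'' on the left being the contribution of the empty cover ($k=0$), and the last equality being the definitions of $N_{g,d}$, $F_g$ and $Z$; subtracting $1$ gives the claim. Equivalently, one extracts the coefficient of $q^d\lambda^{2g-2}$ and checks the resulting finite identity $\hat N_{g,d}/(2g-2)!=\sum_{k\ge1}\frac1{k!}\sum\binom{2g-2}{2g_1-2,\dots,2g_k-2}\prod_j N_{g_j,d_j}/(2g_j-2)!$ directly from the two descriptions of an automorphism-weighted count.

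The one genuinely delicate point is the genus-$1$ stratum: connected genus-$1$ covers are unramified ($2g-2=0$), live in $\lambda^0$, can occur with multiplicity, and $F_1$ additionally carries the term $-\frac1{24}\log q$ coming from constant maps; one must fix once and for all how the degree-$0$/constant contributions are incorporated into $Z$ and $\hat Z$ so that the identity holds exactly (this convention is also the source of the ``$-1$'' rather than a bare equality $\hat Z=\exp Z$). I expect this, together with the precise matching of the symmetry factors $m_\tau!$ and the branch-point multinomials against the $1/(2g-2)!$ normalizations, to be the only real obstacle; the geometric ingredients --- unique decomposition into components, additivity of $d$ and of $\chi$, and the disjointness of the branch loci forced by the distinct-images condition --- are routine.
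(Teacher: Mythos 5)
Your proposal is correct and is precisely the argument the paper itself invokes: the paper offers no details and simply cites the exponential formula (Wilf, Corollary 3.4.1), whereas you carry it out, correctly separating the two bookkeeping mechanisms --- the $m_\tau!$ symmetry factors for repeated (necessarily unramified, genus-one) components and the multinomial distribution of the $2g-2$ fixed branch points among ramified components, whose disjoint nonempty branch loci force $m_\tau=1$ and make $\tfrac{1}{(2g-2)!}\binom{2g-2}{2g_1-2,\dots,2g_k-2}=\prod_j\tfrac{1}{(2g_j-2)!}$ exactly absorb the normalizations. The genus-one subtlety you flag resolves itself, since the $Z$ appearing in the proposition is the double sum over $g,d\geq 1$ (which omits the $-\tfrac{1}{24}\log q$ term of $F_1$) and the unramified components are already handled by the $m_\tau!$ mechanism, with the $-1$ accounting for the empty cover as you say.
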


The proof of it is stricly combinatorial and can be considered as a consequence of the exponential formula in combinatorics. See, for example, Corollary 3.4.1. in \cite{wilf}.

\subsection{Reducing $\hat{N}_{g,d}$ to the trace on an operator}

Let us choose $b$ different points $P_1,\ldots, P_b$ in $E$, where $b=2g-2$, that is, by Hurwitz formula, the number of ramification points of a $2$-branched cover of $E$, and let $\pi_1^b$ be the fundamental group of the $b$-punctured elliptic curve $E-{P_1,\ldots,P_b}$. Then $$\pi_1^b=\langle \gamma_1,\ldots,\gamma_b,\alpha,\beta \, |\,  \gamma_1\cdots \gamma_b=\alpha\beta\alpha^{-1}\beta^{-1}\rangle$$.

Let $c_{trans}$ be the conjugation class of the transpositions in $S^d$ . Define $$\Phi_{g,d}=\{\phi\in Hom(\pi_1^b,S_d)\, |\, \phi(\gamma_i)\in c_{trans}\text{ for every }i\}.$$ Then $S_d$ acts on $\Phi_{g,d}$ by conjugation.

By the next proposition, computing $\hat{N}_{g,d}$ is equivalent to computing $|\Phi_{g,d}|$. 

\begin{proposition}
\begin{itemize}
\item[a.] Monodromy induces a bijection $\hat{X}_{g,d}\cong \Phi_{g,d}/S_d$;
\item[b.] $\hat{N}_{g,d}=|\Phi_{g,d}|/d!$.
\end{itemize}
\end{proposition}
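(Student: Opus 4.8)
The plan is to establish the two parts of the proposition by making precise the classical correspondence between branched covers and monodromy representations. For part (a), I would start with a simple $2$-branched cover $f\colon C\to E$ of degree $d$ with branch locus $\{P_1,\dots,P_b\}$, $b=2g-2$. Restricting $f$ over $E\setminus\{P_1,\dots,P_b\}$ gives an honest degree-$d$ covering space, which (after choosing a base point $x_0$ and a labeling of the fiber $f^{-1}(x_0)$ by $\{1,\dots,d\}$) is classified by its monodromy homomorphism $\phi\colon\pi_1^b\to S_d$. The local condition that every ramification point has index $m=2$ translates, via the standard local model $z\mapsto z^2$ together with the fact that the $P_i$ have distinct preimages in the branch data, into the statement that $\phi(\gamma_i)$ is a single transposition for each $i$; hence $\phi\in\Phi_{g,d}$. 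Changing the labeling of the fiber conjugates $\phi$ by the corresponding element of $S_d$, so the construction is well defined on isomorphism classes and lands in $\Phi_{g,d}/S_d$. For the inverse, given $\phi\in\Phi_{g,d}$ I would build the covering of $E\setminus\{P_1,\dots,P_b\}$ from $\phi$ by the usual fiber construction, then fill in the punctures: near each $P_i$ the local monodromy $\phi(\gamma_i)$ being a transposition means that over a small punctured disc the cover has exactly one connected double-sheeted component (plus $d-2$ trivial sheets), which extends uniquely across $P_i$ by adding one smooth point with local model $z\mapsto z^2$; the Riemann existence theorem gives the resulting $C$ the structure of a smooth complex curve with a holomorphic map to $E$. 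Connectedness of $C$ corresponds exactly to transitivity of the image of $\phi$; but note $\Phi_{g,d}$ as defined does \emph{not} impose transitivity, so this is really the statement for the possibly-disconnected family $\hat X_{g,d}$, which is what the proposition asserts. One then checks the two constructions are mutually inverse up to isomorphism, giving the bijection $\hat X_{g,d}\cong\Phi_{g,d}/S_d$.

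For part (b), I would use the standard relation between a groupoid cardinality and the cardinality of the underlying set modulo a group action. The group $S_d$ acts on the finite set $\Phi_{g,d}$ by conjugation, and the stabilizer of a point $\phi$ is precisely the centralizer of its image, which is canonically identified with $\operatorname{Aut}(C,f)$ for the corresponding cover: an automorphism of $(C,f)$ is a relabeling of the fiber commuting with all monodromy, i.e.\ an element of $S_d$ centralizing $\phi(\pi_1^b)$. Therefore, by the orbit-stabilizer theorem,
\begin{equation*}
\sum_{(C,f)\in\hat X_{g,d}}\frac{1}{|\operatorname{Aut}(C,f)|}
=\sum_{[\phi]\in\Phi_{g,d}/S_d}\frac{1}{|\operatorname{Stab}_{S_d}(\phi)|}
=\frac{1}{|S_d|}\sum_{[\phi]}|S_d\cdot\phi|
=\frac{|\Phi_{g,d}|}{d!},
\end{equation*}
which is exactly $\hat N_{g,d}=|\Phi_{g,d}|/d!$. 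Finiteness of $\Phi_{g,d}$ is immediate since $\pi_1^b$ is finitely generated and $S_d$ is finite, which also secures the finiteness of $\hat N_{g,d}$ (and hence of $N_{g,d}$) referenced earlier.

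I expect the main obstacle to be the careful handling of the filling-in step at the branch points and the precise matching of automorphism groups. Showing that a transposition local monodromy forces a unique smooth extension with the correct ramification index requires invoking the classification of covers of the punctured disc and checking compatibility with the complex-analytic (not merely topological) structure, i.e.\ genuinely using the Riemann existence theorem rather than hand-waving; and one must be attentive to base-point and path conventions so that the identification of $\operatorname{Aut}(C,f)$ with $\operatorname{Stab}_{S_d}(\phi)$ is canonical rather than just an abstract isomorphism. The genus bookkeeping via Hurwitz ($b=2g-2$ with each ramification point contributing $1$) should be recorded but is routine. Everything else is a standard unwinding of definitions, so I would present (a) in reasonable detail and (b) as the short orbit-counting computation above.
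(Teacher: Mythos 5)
Your proof is correct and is exactly the standard argument this proposition rests on: the Riemann existence theorem identifies isomorphism classes of (possibly disconnected) branched covers with prescribed transposition local monodromies with conjugacy classes of homomorphisms in $\Phi_{g,d}$, and the orbit--stabilizer computation with $\operatorname{Aut}(C,f)\cong\operatorname{Stab}_{S_d}(\phi)$ (the centralizer of $\phi(\pi_1^b)$) gives $\hat N_{g,d}=|\Phi_{g,d}|/d!$. The paper states this proposition without proof, so your write-up supplies precisely the omitted argument, including the correct observation that non-transitivity of $\phi$ is what matches the possibly disconnected family $\hat X_{g,d}$.
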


We want now want to compute $|\Phi_{g,d}|$. Since we have a description of $\pi_1^b$ by generators and relations, we get a bijective correspondence between $\Phi_{g,d}$ and the set $$\{(\tau_1,\ldots,\tau_b,\tau,\sigma)\in (S_d)^{b+2}\, |\, \tau_1\cdots \tau_b=\tau\sigma\tau^{-1}\sigma^{-1},\, \tau_i\in c_{trans}\text{ for every }i\}.$$ 
So we need to count the elements of this set. Fix $\sigma\in S_d$. Let $$P_{g,d}(\sigma)=\{(\tau_1,\ldots,\tau_b)\in (S_d)^b\, |\, \tau_1\cdots \tau_b\sigma\text{ and } \sigma \text{ are conjugate}, \tau_i\in {c_{trans}}\text{ for every }i\}.$$ 

Since given a $\tau$ such that  $\tau_1\cdots \tau_b=\tau\sigma\tau^{-1}\sigma^{-1}$ we can get all the other $\tau$ with such a property by multiplying it with an element of the conjugation class of $\sigma$, $c(\sigma)$, we have $$|\Phi_{g,d}|=\sum_{{\sigma\in{S_d}}}\frac{d!}{|c(\sigma)|}|P_{g,d}(\sigma)|.$$ 

Since $P_{g,d}(\tau\sigma\tau^{-1})=\tau P_{g,d}(\sigma)\tau^{-1}$, $|P_{g,d}|:S_d \rightarrow \mathbb{N}$ is a class function, therefore $$|\Phi_{g,d}|=\sum_{{\sigma\in{\mathcal{C}_d}}} d!|P_{g,d}(\sigma)|,$$ and $\hat{N}_{g,d}=\sum_{c\in{\mathcal{C}_d}} |P_{g,d}(c)|$. 

For now on, let's fix a complete set of representatives $\{\sigma_1,\ldots,\sigma_m\}$ of the conjugation classes $\mathcal{C}_d=\{c_1,\ldots,c_m\}$, $c(\sigma_i)=c_i$. Observe that $m$ is the number of partitions of $d$ (that we call $part(d)$) since the conjugacy class of an element of $S_d$ is determined by the partition induced by its cycle decomposition Consider the matrix $$(M_d)_{ij}=|\{\tau\in c_{trans}\, |\, \tau\sigma_i\in c_j\}|.$$

An straightforward induction argument, with the base case being the last definition gives us $$(M_d^k)_{ij}=|\{(\tau_1,\ldots,\tau_k)\in (S_d)^k\, | \, \tau_1,\cdots\tau_k\sigma_i\in c_j, \tau_l\in c_{trans}\text{ for every }l\}|.$$ 

From this observation we get $(M_d^b)_{i,i}=|P_{g,d}(\sigma_i)|$ and therefore we have $\hat{N}_{g,d}=\operatorname{Tr}(M_d^{b})$. We also conclude that $(M_d^k)_{i,i}=0$ for odd $k$. Therefore we have the following expression for $\hat{Z}$:
\[
\hat{Z}(q,\lambda)=\sum_{g\geq 1}\sum_{d\geq 1}\frac{\operatorname{Tr}(M_d^{b})}{(2g-2)!}q^d\lambda^{2g-2}=\sum_{d\geq 1}\operatorname{tr}(\exp(M_d\cdot \lambda))q^d.
\]

Let $\alpha_{d,i}$ be the eigenvalues of $M_d$. Then $\hat{N}_{g,d}=\sum_{i=1}^m \alpha_{d,i}^k$. So the last expression gives us the following proposition.

\begin{proposition}
$.\hat{Z}(q,\lambda)=\sum_{d\geq 1}\sum_{i\geq 1}^{part(d)}\exp(\alpha_{d,i}\lambda)q^d$.
\end{proposition}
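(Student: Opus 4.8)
The plan is to assemble the proposition directly from the two facts already established: the identity $\hat{N}_{g,d}=\operatorname{Tr}(M_d^{2g-2})$ derived above, and the vanishing $(M_d^k)_{i,i}=0$ for odd $k$. All the substantive content — the monodromy bijection, the finiteness of $\hat{N}_{g,d}$, and the identification $(M_d^b)_{i,i}=|P_{g,d}(\sigma_i)|$ — is already in place, so what remains is bookkeeping with formal power series plus one elementary fact about matrix exponentials.

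First I would substitute $\hat{N}_{g,d}=\operatorname{Tr}(M_d^{2g-2})$ into the definition of $\hat{Z}(q,\lambda)$ and work coefficient-by-coefficient in $q$, which makes every reordering of the sum over $g$ harmless. Fixing $d$, I reindex the inner sum over $g\geq 1$ by the even integer $b=2g-2\in\{0,2,4,\dots\}$, obtaining $\sum_{b\ \mathrm{even},\ b\geq 0}\operatorname{Tr}(M_d^b)\lambda^b/b!$. Using the fact, noted earlier, that $\operatorname{Tr}(M_d^b)=0$ for odd $b$ — because a transposition is an odd permutation, so a product of an odd number of transpositions composed with $\sigma_i$ lands in the opposite parity class and cannot be conjugate to $\sigma_i$ — I may freely insert the odd-$b$ terms, turning the series into $\sum_{b\geq 0}\operatorname{Tr}\big((\lambda M_d)^b\big)/b!=\operatorname{Tr}\big(\exp(\lambda M_d)\big)$; interchanging $\operatorname{Tr}$ with the everywhere-convergent exponential series is legitimate since $M_d$ is a fixed finite matrix. (Equivalently, one may simply start from the already displayed identity $\hat{Z}(q,\lambda)=\sum_{d\geq 1}\operatorname{tr}(\exp(M_d\lambda))q^d$ and skip this paragraph.)

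Finally I would invoke Schur triangularization (or Jordan form) of $M_d$ over $\mathbb{C}$: since $M_d$ is the $part(d)\times part(d)$ matrix $(M_d)_{ij}=|\{\tau\in c_{trans}\mid \tau\sigma_i\in c_j\}|$, its characteristic polynomial has roots $\alpha_{d,1},\dots,\alpha_{d,part(d)}$ counted with multiplicity, these are precisely the eigenvalues, hence the eigenvalues of $\exp(\lambda M_d)$ are $\exp(\lambda\alpha_{d,i})$, and therefore $\operatorname{Tr}\big(\exp(\lambda M_d)\big)=\sum_{i=1}^{part(d)}\exp(\alpha_{d,i}\lambda)$. Summing over $d$ against $q^d$ then yields exactly $\hat{Z}(q,\lambda)=\sum_{d\geq 1}\sum_{i=1}^{part(d)}\exp(\alpha_{d,i}\lambda)q^d$, completing the proof.

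I do not expect a genuine obstacle here; the only points that demand care are organizational. One must justify the reindexing $g\mapsto b=2g-2$ and check the $g=1$ (i.e. $b=0$) term, where it contributes $\operatorname{Tr}(M_d^0)=\operatorname{Tr}(I)=part(d)$ in accordance with $\operatorname{Tr}(\exp(0))$; and one must make explicit use of the vanishing of the odd-power traces, since that is precisely what lets the even-only sum complete to the full exponential series. It is also worth remarking that for each fixed $d$ the function $\lambda\mapsto\operatorname{Tr}(\exp(\lambda M_d))$ is entire with power series $\sum_b \operatorname{Tr}(M_d^b)\lambda^b/b!$, so the whole expression is a well-defined element of $\mathbb{C}[[q,\lambda]]$ and the manipulations above are valid term by term.
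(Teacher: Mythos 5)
Your proposal is correct and follows essentially the same route as the paper: substitute $\hat{N}_{g,d}=\operatorname{Tr}(M_d^{2g-2})$, use the vanishing of the odd-power traces to complete the sum over $b=2g-2$ into $\operatorname{Tr}(\exp(\lambda M_d))$, and then express the trace of the exponential as $\sum_i \exp(\alpha_{d,i}\lambda)$ via the eigenvalues of $M_d$. Your added justifications (the parity argument for why odd-power diagonal entries vanish, the check of the $g=1$, $b=0$ term against $\operatorname{Tr}(M_d^0)=part(d)$, and the remark that only eigenvalues with multiplicity, not diagonalizability, are needed) are all correct and merely make explicit what the paper leaves implicit.
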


\subsection{Diagonalizing $M_d$}

Recall that the class algebra $\mathcal{H}_d$, which is the center of the group algebra $\mathbb{C}[S_d]$  (i.e., functions $S_d\rightarrow \mathbb{C}$ constant on conjugation classes) has a basis $(z_c)_{c\in \mathcal{C}_d}$, where $z_c=\sum_{\sigma\in c}\sigma$. Since the inverse of a transposition is a transposition, we have the following easy proposition.

\begin{proposition}
The matrix of the operator on $\mathcal{H}_d$ defined by multiplication by $z_{c_{trans}}$ is the transpose of $M_d$.
\end{proposition}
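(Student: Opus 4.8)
The plan is to unwind the definitions on both sides and check that the two operators agree on the basis $(z_c)_{c\in\mathcal{C}_d}$ of the class algebra $\mathcal{H}_d$. Multiplication by $z_{c_{trans}}=\sum_{\tau\in c_{trans}}\tau$ sends a basis element $z_{c_i}=\sum_{\sigma\in c_i}\sigma$ to $\sum_{\tau\in c_{trans}}\sum_{\sigma\in c_i}\tau\sigma$, which is again a central element and hence expands as $\sum_j N_{ij}\,z_{c_j}$ for some nonnegative integers $N_{ij}$. The task is to identify $N_{ij}$ with $(M_d)_{ji}$.

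First I would extract $N_{ij}$ as the coefficient of any fixed $\sigma'\in c_j$ in $z_{c_{trans}}\cdot z_{c_i}$; since the product is central, this coefficient does not depend on the chosen representative. Explicitly, $N_{ij}=|\{(\tau,\sigma)\in c_{trans}\times c_i \,:\, \tau\sigma=\sigma'\}|$. Fixing the representative $\sigma_j$ for $\sigma'$, the constraint $\tau\sigma=\sigma_j$ determines $\sigma=\tau^{-1}\sigma_j$ from $\tau$, and $\tau^{-1}$ is again a transposition, so $N_{ij}=|\{\tau\in c_{trans}\,:\,\tau\sigma_j\in c_i\}|$, using that $\sigma\in c_i\iff \tau^{-1}\sigma_j\in c_i$ and that $\tau\mapsto\tau^{-1}$ is a bijection of $c_{trans}$ onto itself. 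Comparing with the definition $(M_d)_{ji}=|\{\tau\in c_{trans}\,:\,\tau\sigma_j\in c_i\}|$, we get $N_{ij}=(M_d)_{ji}$, i.e. the matrix of multiplication by $z_{c_{trans}}$ in the basis $(z_c)$ is $M_d^T$.

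The one genuine subtlety — the ``easy'' part flagged in the statement — is the use of the fact that transpositions are closed under inversion: this is what lets us pass from counting pairs $(\tau,\sigma)$ with $\tau\sigma=\sigma_j$ to the count defining $(M_d)_{ji}$ without introducing a spurious transpose or a $\tau\leftrightarrow\tau^{-1}$ mismatch. I would also be careful to state the convention that the matrix of a linear operator $T$ in a basis $(e_i)$ has $(i,j)$ entry equal to the coefficient of $e_i$ in $T e_j$, so that ``matrix of multiplication by $z_{c_{trans}}$'' unambiguously means the transpose of $M_d$ rather than $M_d$ itself. No serious obstacle is expected; the proof is a short bookkeeping argument, and the only place to slip is in tracking which index is summed over and whether one lands on $M_d$ or $M_d^T$.
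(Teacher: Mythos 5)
Your core computation is correct and is exactly the argument the paper has in mind (the paper offers no proof beyond the remark that transpositions are closed under inversion, which you correctly isolate as the key point): the coefficient of $z_{c_j}$ in $z_{c_{trans}}\cdot z_{c_i}$ is $|\{\tau\in c_{trans}:\tau\sigma_j\in c_i\}|=(M_d)_{ji}$, so writing $z_{c_{trans}}\cdot z_{c_i}=\sum_j N_{ij}z_{c_j}$ gives $N=M_d^T$. However, your closing remark about conventions is inverted. Under the convention you state --- $(i,j)$ entry of the matrix of $T$ equals the coefficient of $e_i$ in $Te_j$ (the column-vector convention) --- your own formula gives $A_{ij}=N_{ji}=(M_d)_{ij}$, i.e.\ the matrix is $M_d$ itself, not $M_d^T$. (A quick check in $S_3$ confirms $M_d$ need not be symmetric, so the distinction is real.) The proposition as stated holds under the opposite convention, $Tz_{c_i}=\sum_j B_{ij}z_{c_j}$, which is precisely your $N$. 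Since you flagged the convention as the one place to slip, this should be corrected; it is harmless for the rest of the paper, though, because only $\operatorname{Tr}(M_d^{b})$ and the eigenvalues of $M_d$ are used downstream, and these are invariant under transposition.
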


Now, the eigenvalues of this operator follow from Schur orthogonality relations: 

For any irreducible character $\chi$ of $S_d$, define $$w_{\chi}=\frac{dim(\chi)}{d!}\sum_{c\in{\mathcal{C}_d}}\chi(c^{-1})z_c=\frac{dim(\chi)}{d!}\sum_{\sigma\in{\mathcal{C}_d}}\chi(\sigma^{-1})\sigma.$$

Schur orthogonality relations tell us that $w_{\chi}\cdot w_{\chi'}=\delta_{\chi,\chi'}w_{\chi}$ (this is Kronecker's delta), and $$z_c=\sum_{\chi}=\big(\frac{|c^{-1}|\chi(c^{-1})}{dim(\chi)}\big)w_\chi.$$ From this identities it follows immediately that $(w_\chi)_{\chi\in \mathcal{R}_d}$ is basis of $\mathcal{H}_d$ which diagonalizes multiplication by $z_{c_{trans}}$. The eigenvector $w_{\chi}$ has eigenvalue $$\frac{{d\choose 2} \chi(c)}{dim(\chi)}.$$

\subsection{Frobenius formula and Haneko-Zagier result}

It is known that irreducible representations of $S_d$ are in one-to-one correspondence with partitions of $d$, (for example, via the Sprecht representation). The particular construction of this representations will not be relevant to us, but a fomula of Frobenius will. Let $\chi$ be an irreducible representation of $S_d$ and $\lambda=(\lambda_1,\ldots, \lambda_l)$ its corresponding partition. Define $u_i=\lambda_i-i+\frac{1}{2}$ and $v_i$ defined as for $u_i$, but by considering the transpose partition of $\lambda$.. The Frobenius formula we will use affirms that $$\frac{{d\choose 2} \chi(c)}{dim(\chi)}=\frac{1}{2}\sum_i (u_i^2-v_i^2),$$ so it computes the eigenvalues found in the last section.

Consider the product $$\prod_{u\in \mathbb{N}+\frac{1}{2}}(1+\zeta q^u e^{\frac{u^2}{2}\lambda})\prod_{v\in \mathbb{N}+\frac{1}{2}}(1+\zeta^{-1} q^u e^{\frac{-v^2}{2}\lambda})$$

We can see it as a Laurent formal series in $\zeta$. Propositions 3 and 4 gives us 

\begin{proposition} $\hat{Z}(q,\lambda)=\text{coef. of $\zeta^0$ in} \prod_{u\in \mathbb{N}+\frac{1}{2}}(1+\zeta q^u e^{\frac{u^2}{2}\lambda})\prod_{v\in \mathbb{N}+\frac{1}{2}}(1+\zeta^{-1} q^u e^{\frac{-v^2}{2}\lambda})-1.$ \end{proposition}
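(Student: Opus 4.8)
The plan is to combine Proposition (just above) expressing $\hat{Z}$ as a sum over conjugacy classes of $\sum_i \exp(\alpha_{d,i}\lambda)q^d$ with the diagonalization of $M_d$ by the idempotents $w_\chi$ and the Frobenius formula. Since $M_d$ (or rather its transpose, which has the same eigenvalues) acts on the class algebra $\mathcal{H}_d$ with eigenvalues $\binom{d}{2}\chi(c)/\dim(\chi) = \tfrac12\sum_i(u_i^2 - v_i^2)$ indexed by partitions $\lambda$ of $d$, and partitions of $d$ are in bijection with irreducible characters, we may rewrite
\[
\hat{Z}(q,\lambda) = \sum_{d\ge 1}\; \sum_{\lambda \vdash d} q^{|\lambda|} \exp\!\Big(\tfrac{\lambda}{2}\sum_i (u_i(\lambda)^2 - v_i(\lambda)^2)\Big),
\]
where I am overloading $\lambda$ for both the genus-variable and the partition; in the write-up I would rename the partition to avoid the clash. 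The task is then purely combinatorial: identify this sum over all partitions with the coefficient of $\zeta^0$ in the given infinite product, minus $1$.

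The key step is a bijective encoding of partitions via their Maya diagram (equivalently, the Frobenius/beta-set description). To a partition $\mu = (\mu_1,\mu_2,\ldots)$ one associates the set of half-integers $\{\mu_i - i + \tfrac12 : i \ge 1\}$; this is a subset $S \subset \ZZ + \tfrac12$ that agrees with the "vacuum" set $\ZZ_{<0} + \tfrac12$ outside a finite range. Write $S^+ = S \cap (\ZZ_{>0}+\tfrac12)$ for the "particles" and $S^- = (\ZZ_{<0}+\tfrac12) \setminus S$ for the "holes"; these are exactly the $u_i$ that are positive and (the negatives of) the $v_j$ that are positive, and one has the standard identities $|S^+| = |S^-|$ (both equal the Durfee square size) and $|\mu| = \sum_{u \in S^+} u + \sum_{v \in S^-} v$. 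Expanding the first product $\prod_{u \in \NN + \frac12}(1 + \zeta q^u e^{u^2\lambda/2})$ picks out a finite subset $S^+$ of positive half-integers, contributing $\zeta^{|S^+|} q^{\sum u} e^{(\lambda/2)\sum u^2}$, and expanding the second product picks out a finite subset $S^-$ contributing $\zeta^{-|S^-|} q^{\sum v} e^{-(\lambda/2)\sum v^2}$. Taking the coefficient of $\zeta^0$ forces $|S^+| = |S^-|$, and then the pair $(S^+, S^-)$ is precisely the data of a partition $\mu$ with $q$-exponent $|\mu|$ and $e^\lambda$-exponent $\tfrac12(\sum u^2 - \sum v^2)$, which matches the Frobenius-formula eigenvalue. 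Summing over all such pairs (including the empty pair, which accounts for the $-1$) reproduces the displayed expression for $\hat{Z}$.

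Concretely the steps are: (i) restate Proposition ~4's formula for $\hat{Z}$ and substitute the Frobenius formula from the preceding subsection to get the sum over partitions of $d$; (ii) recall the beta-set / Maya-diagram parametrization of partitions and the identities relating $|\mu|$ and $\sum u_i^2 - \sum v_i^2$ to the particle/hole data; (iii) formally expand the two infinite products as Laurent series in $\zeta$, noting each factor contributes either $1$ or its $\zeta^{\pm 1}$-term and only finitely many non-$1$ choices give a fixed power of $q$; (iv) extract the coefficient of $\zeta^0$, observe it equals $\sum_{k\ge 0}$ (sum over $k$-element particle sets)(sum over $k$-element hole sets), and match this term-by-term with the partition sum via the bijection; (v) subtract $1$ to remove the empty partition, which contributes the constant term.

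The main obstacle is step (ii)–(iv): getting the bookkeeping exactly right, in particular checking that the half-integer conventions ($\NN + \tfrac12$ versus $\ZZ_{\ge 0}+\tfrac12$, and the sign in $u_i = \lambda_i - i + \tfrac12$ versus $v_i$ from the transpose partition) line up so that the exponent of $q$ coming from the product is genuinely $|\mu|$ rather than $|\mu|$ shifted by a function of the Durfee size, and similarly that the quadratic exponent is $\tfrac12\sum(u_i^2 - v_i^2)$ with the correct split into positive $u_i$'s and positive $v_i$'s (the non-positive $u_i$ of $\mu$ and the non-positive $v_i$ cancel against the "vacuum" and contribute $1$'s in the product). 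One must also be slightly careful that these are formal power series in $q$ and $\lambda$ with Laurent coefficients in $\zeta$, so that "coefficient of $\zeta^0$" is well-defined; the standard fix is to work coefficient-by-coefficient in $q$, where only finitely many factors of each product are activated. Once the conventions are pinned down the identification is immediate, so I would present (ii) carefully with a small worked example (e.g. $\mu = (2,1)$) and treat the rest as a direct comparison.
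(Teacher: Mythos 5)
Your proposal is correct and follows exactly the route the paper intends: the paper offers no proof beyond the remark that the statement ``follows from Propositions 3 and 4,'' and your Maya-diagram/particle--hole bijection between pairs $(S^+,S^-)$ with $|S^+|=|S^-|$ and partitions via modified Frobenius coordinates is precisely the standard combinatorial step needed to pass from the sum over partitions weighted by $q^{|\mu|}e^{\frac{\lambda}{2}\sum_i(u_i^2-v_i^2)}$ to the coefficient of $\zeta^0$ in the double product. The only points to polish in a final write-up are the notational ones you already flag (whether $S^-$ denotes the holes or their negatives, and the paper's typo $q^u$ for $q^v$ in the second product), neither of which affects correctness.
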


In \cite{Kaneko}, Kaneko and Zagier considered the series
\[
\Theta(q,\lambda,z)=\prod_{n\geq 1}(1-q^n)\prod_{u\in \mathbb{N}+\frac{1}{2}}\bigl(1+\zeta q^u e^{\frac{u^2}{2}\lambda}\bigr)\prod_{v\in \mathbb{N}+\frac{1}{2}}\bigl(1+\zeta^{-1} q^v e^{-\frac{v^2}{2}\lambda}\bigr).
\]
Let $\Theta_0(q,\lambda)$ be the coefficient of $\zeta^0$ in $\Theta(q,\lambda,z)$. Then, since $\Theta_0(q,\lambda)=\Theta_0(q,-\lambda)$, we have an expansion
\[
\Theta_0(q,\lambda)=\sum_{n\geq 0} A_n(q)\lambda^{2n}.
\]
Kaneko and Zagier prove (Theorem~1, \cite{Kaneko}) that $A_n(q)$ is a quasimodular form of weight $6n$. This implies that the coefficient of $\lambda^{2g-2}$ in $\log  \Theta_0(q,\lambda)$ is a modular form of weight $6g-6$, since, by using the Taylor series of the logarithm around $-1$, we have
\[
\log \Theta_0(q,\lambda)=\log\!\Bigl(1+\sum_{n\geq 1} A_n(q)\lambda^{2n}\Bigr)=\sum_{m\geq 1}\frac{(-1)^{m+1}}{m}\Bigl(\sum_{n\geq 1} A_n(q)\lambda^{2n}\Bigr)^{\!m}.
\]
On the other hand, by Propositions~1 and~5, we get
\[
\log \Theta_0(q,\lambda)=\log\!\Bigl(\prod_{n\geq 1}(1-q^n)\bigl(\hat{Z}(q,\lambda)+1\bigr)\Bigr)=\sum_{n\geq 1}\log(1-q^n)+\log\!\bigl(\hat{Z}(q,\lambda)+1\bigr)=\sum_{n\geq 1}\log(1-q^n)+Z(q,\lambda).
\]
Since the coefficient of $\lambda^{2g-2}$ in this last sum is the one coming from $Z(q,\lambda)$, i.e.,
\[
\frac{F_g(q)}{(2g-2)!},
\]
the proof of Theorem~1 for $m=2$ is concluded.

\printbibliography

\end{document}